\newtheorem{introthm}{Theorem}
\newtheorem{theorem}{Theorem}[section]
\newtheorem{lemma}[theorem]{Lemma}
\theoremstyle{definition}
\newtheorem{remark}[theorem]{Remark}
\newtheorem{definition}[theorem]{Definition}
\def\IR{\mathbb{R}}
\def\sm{\setminus}
\def\eps{\varepsilon}
\def\al{\alpha}
\def\be{\beta}
\def\de{\delta}
\def\la{\lambda}
\def\ro{\varrho}
\def\si{\sigma}
\DeclareMathOperator{\Aut}{Aut}
\DeclareMathOperator{\var}{var}
\DeclareMathOperator{\cov}{cov}
\DeclareMathOperator{\corr}{corr}
\newcommand{\mathdef}{\stackrel{\textrm{\scriptsize def}}{=}}
\def\eq{=}
\renewcommand{\vec}[1]{\mathbf{#1}}
\begin{document}

\title[Invariant Gaussian processes and independent sets]
{Invariant Gaussian processes and independent sets\\
on regular graphs of large girth}

\author[Cs\'oka]{Endre Cs\'oka}
\address{Mathematics Institute, 
University of Warwick} 
\email{csoka.endre@renyi.mta.hu}

\author[Gerencs\'er]{Bal\'azs Gerencs\'er}
\address{Alfr\'ed R\'enyi Institute of Mathematics, 
Hungarian Academy of Sciences} 
\email{gerencser.balazs@renyi.mta.hu}

\author[Harangi]{Viktor Harangi}
\address{Department of Mathematics, 
University of Toronto}
\email{harangi@math.toronto.edu}

\author[Vir\'ag]{B\'alint Vir\'ag}
\address{Department of Mathematics, 
University of Toronto}
\email{balint@math.toronto.edu}

\keywords{independent set, independence ratio, regular graph,
large girth, random regular graph, regular tree, factor of i.i.d., invariant Gaussian process}

\subjclass[2010]{05C69, 60G15}

\date{}

\begin{abstract}
We prove that every $3$-regular, $n$-vertex simple graph
with sufficiently large girth
contains an independent set of size at least $0.4361 n$.
(The best known bound is $0.4352 n$.)
In fact, computer simulation suggests that the bound our method provides
is about $0.438 n$. 

Our method uses invariant Gaussian processes on the $d$-regular tree
that satisfy the eigenvector equation at each vertex for a certain eigenvalue $\la$.
We show that such processes can be approximated by i.i.d.\ factors
provided that $|\la| \leq 2\sqrt{d-1}$. We then use these approximations
for $\la = -2\sqrt{d-1}$ to produce factor of i.i.d.\ independent sets on regular trees.
\end{abstract}
%
%
%
%
%

\maketitle


\section{Introduction}

An \emph{independent set} is a set of vertices in a graph, no two of which are adjacent.
The \emph{independence ratio} of a graph is the size of its largest independent set
divided by the total number of vertices.
Let $d \geq 3$ be an integer and suppose that
$G$ is a $d$-regular finite graph with sufficiently large girth,
that is, $G$ does not contain cycles shorter than a sufficiently large given length.
In other words, $G$ locally looks like a $d$-regular tree.
What can we say about the independence ratio of $G$?

In a regular (infinite) tree every other vertex can be chosen,
so one is tempted to say that the independence ratio should tend to $1/2$
when the girth goes to infinity. This is not the case, however:
Bollob\'as \cite{Bo_ind_set} showed that (uniform) random $d$-regular graphs
have essentially large girth (i.e., the number of short cycles is small)
and their independence ratios are bounded away from $1/2$ with high probability.
Asymptotically (as $d \to \infty$)
the independence ratio of the random $d$-regular graph is $2 (\log d) / d$
(the lower bound is due to Frieze and {\L}uczak \cite{frieze_random_regular}).
The best known upper bound for random 
$3$-regular graphs is $0.45537$ due to McKay \cite{mckay}, 
who sharpened \cite{Bo_ind_set}. 

%
Shearer \cite{shearer} showed that 
for any triangle-free graph with average degree $d$
the independence ratio is at least
$$ \frac{d \log d - d + 1}{(d-1)^2} .$$
For regular graphs of large girth, 
Shearer himself found an improvement \cite{shearer2}. 
Lauer and Wolmard further improved that bound for $d \geq 7$ 
by analyzing a simple greedy algorithm \cite{lauer_wormald}. 
All of these bounds are the same asymptotically: $(\log d)/d$. 
For small values of $d$ more sophisticated algorithms 
have been analyzed using computer-assisted proofs: 
in his thesis Hoppen presents an approach that outdoes
the above-mentioned bounds when $d \leq 10$
\cite[Table 5.3.1]{hoppen_thesis}.
For $d=3$ Kardo\v{s}, Kr\'al and Volec improved Hoppen's method
and obtained the bound $0.4352$ \cite{cubic}.
Compare this to McKay's upper bound $0.45537$.

The above lower bounds are based on local improvements of 
the standard greedy algorithm. Our main theorem is based on 
a different approach: we use Gaussian wave functions to find independent sets. 
\begin{introthm} \label{thm:3reg}
Every $3$-regular graph with sufficiently large girth
has independence ratio at least $0.4361$.
\end{introthm}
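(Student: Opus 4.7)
The plan is to construct an independent set as a factor of i.i.d.\ process on the $3$-regular tree $T_3$, and then transfer the result to any finite $3$-regular graph of sufficiently large girth via the standard correspondence between invariant factor processes on $T_d$ and local algorithms on sparse graphs.

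First I would fix the eigenvalue $\la = -2\sqrt{d-1} = -2\sqrt{2}$ and build an invariant centered Gaussian process $\{X_v\}_{v \in T_3}$ that is stationary under $\Aut(T_3)$ and satisfies $\sum_{u \sim v} X_u = \la X_v$ at every vertex $v$. The covariance of such a process is determined by the spherical function for $\la$, so after normalizing $\var X_v = 1$ the nearest-neighbor correlation equals $\la/d = -2\sqrt{2}/3$, which is strongly negative. Since $|\la| \leq 2\sqrt{d-1}$, the approximation theorem advertised in the abstract ensures that $X$ is an $L^2$-limit of factor of i.i.d.\ processes on $T_3$.

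Next I would threshold. For a real parameter $t$ let $S_t := \{v : X_v > t\}$; this is a factor of i.i.d.\ random subset with vertex density $\al(t) := \Pr(X_0 > t)$ and edge density
\[
\be(t) := \Pr(X_u > t,\, X_v > t) \quad \text{for an edge } \{u,v\},
\]
the latter computable from a bivariate Gaussian with correlation $-2\sqrt{2}/3$. Because the correlation is strongly negative, $\be(t)$ is much smaller than $\al(t)^2$. To turn $S_t$ into an independent set I would use a local repair: on every edge both of whose endpoints lie in $S_t$, delete one endpoint chosen by the background i.i.d.\ labels, which preserves the factor of i.i.d.\ property. The resulting independent set has expected density at least $\al(t) - \tfrac{d}{2}\be(t)$, and I would optimize numerically over $t$ for $d=3$; with correlation as negative as $-2\sqrt{2}/3$ this should comfortably exceed the greedy bound $0.4352$ and reach the claimed $0.4361$. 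The transfer to a finite $d$-regular graph $G$ is routine: truncate the factor of i.i.d.\ rule to depend only on an $R$-neighborhood with arbitrarily small $L^1$-error, and observe that if the girth of $G$ exceeds $2R+1$ then each $R$-ball in $G$ is isomorphic to a ball in $T_d$, so applying the same rule to i.i.d.\ labels on $G$ produces an independent set whose expected (and hence achievable) density is within the truncation error of the tree density.

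The main technical obstacle I anticipate is the factor of i.i.d.\ approximation precisely at the spectral edge $\la = -2\sqrt{d-1}$: eigenvector processes with eigenvalue on the boundary of the $L^2$-spectrum of the adjacency operator of $T_d$ are the most delicate case, and showing they are genuine factor of i.i.d.\ limits (rather than only approximable from strictly inside the spectrum) is likely the heart of the paper. A secondary difficulty is extracting a sharp enough lower bound from the threshold-plus-repair construction; to beat the existing $0.4352$ one presumably needs either a more careful repair rule (e.g.\ a second round that conditions on the local Gaussian configuration) or a small perturbation by additional independent randomness to break remaining ties.
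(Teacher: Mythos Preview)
Your setup—the Gaussian wave function at $\la=-2\sqrt{2}$, thresholding, and the transfer to finite graphs via block factors—matches the paper, and you correctly flag the spectral-edge approximation as the delicate analytic point. The gap is in the repair step and its arithmetic.

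The bound $\al(t)-\tfrac{3}{2}\be(t)$ does \emph{not} reach $0.4361$. With $\rho=-2\sqrt{2}/3$ one has $\sqrt{1-\rho^2}=1/3$ and $\sqrt{(1-\rho)/(1+\rho)}=3+2\sqrt{2}$; optimizing in $t$ gives $t^\ast\approx 0.074$ and a maximum of roughly $0.424$—below even the paper's warm-up bound $0.4298$ from ``keep every local maximum'', and well below $0.4352$. The structural reason is that deleting one endpoint per bad edge can, in a tree component on $k$ vertices with $k-1$ edges, throw away $k-1$ of them, whereas every tree is bipartite and contains an independent set of size $\lceil k/2\rceil$.

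The paper's argument hinges on this observation. It thresholds by keeping $\{v:X_v\le\tau\}$, proves a percolation statement—for $\tau\le 0.086$ every component is almost surely finite (this uses the Markov-field structure of the wave function along paths and is nontrivial because the kept density already exceeds $0.534$)—and then selects the larger bipartite class from each finite tree component. This yields the lower bound
\[
\tfrac{1}{2}(1-p_0)+\sum_{k\ge 1}\tfrac{1}{2(2k-1)}\,p_{2k-1},
\]
where $p_s$ is the probability that a fixed vertex lies in a size-$s$ component. At $\tau=0$ the $p_1$ term alone already gives $0.4301$; pushing to $\tau=0.086$ and evaluating $p_1,p_3,p_5$ by computer-assisted numerical integration is what produces $0.4361$. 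Neither the finite-component lemma at that density nor the component-size numerics is recoverable from a one-endpoint-per-edge repair, so your proposed route, even with a ``second round'' tweak, will not close the gap.
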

A related problem is finding induced bipartite subgraphs
with a lot of vertices. (Equivalently, we are looking for
two disjoint independent sets with large total size.)
This problem was studied for random $3$-regular graphs in \cite{hladky,hladky_bachelor}.
%
\begin{introthm} \label{thm:ind_bipartite}
Every $3$-regular graph with sufficiently large girth
has an induced subgraph that is bipartite and that contains
at least a 
$$ 1 - \frac{3}{4\pi} \arccos \left( \frac{5}{6} \right) > 0.86 $$
fraction of the vertices. 
\end{introthm}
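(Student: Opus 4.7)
The plan is to extend the factor-of-i.i.d.\ strategy of Theorem~\ref{thm:3reg} to the bipartite-induced-subgraph problem. Apply the paper's approximation result with $d=3$ and $\la = -2\sqrt{d-1} = -2\sqrt{2}$ to produce an invariant Gaussian process $(X_v)_{v \in V(T_3)}$ on the $3$-regular tree that satisfies $\sum_{u \sim v} X_u = \la X_v$ and is approximable by factors of i.i.d. Using the eigenvector relation with $\rho_0 = 1$ and $\rho_1 = \la/d = -2\sqrt{2}/3$, the recursion $\rho_{k-1} + (d-1)\rho_{k+1} = \la \rho_k$ gives $\rho_2 = 5/6$ for the distance-$2$ correlation; this is precisely the number appearing inside the $\arccos$ of the claimed bound.

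Define the coloring $c(v) = \operatorname{sign}(X_v) \in \{\pm 1\}$. Since $\rho_1 \approx -0.94$, neighbors almost always disagree, so $c$ is nearly a proper $2$-coloring of $G$. Call an edge of $G$ \emph{monochromatic} if both endpoints receive the same $c$-value. For each monochromatic edge delete exactly one endpoint, chosen by a measurable rule that is itself a factor of i.i.d.\ (for example, delete the endpoint with smaller $|X|$, with coin-flip tie-breaking). The resulting induced subgraph is bipartite with bipartition inherited from $c$, so the theorem reduces to showing that the expected fraction of deleted vertices is at most $\frac{3}{4\pi}\arccos(5/6)$. The standard transfer from $T_3$ to $G$ then costs only vanishing error as the girth grows.

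The heart of the proof is this last probability computation, and it is also the main obstacle. The naive ``one deletion per monochromatic edge via an independent coin flip'' bound gives only $\tfrac{3}{2} \cdot \arccos(2\sqrt{2}/3)/\pi \approx 0.162$, which is insufficient. The required improvement must exploit the eigenvector constraint $X_{u_1} + X_{u_2} + X_{u_3} = \la X_v$: conditional on $X_v$, the three neighbour values are forced to lie on a two-dimensional affine subspace with pairwise correlation $-1/2$, while unconditionally a distance-$2$ pair $(X_{u_i}, X_{u_j})$ has correlation $\rho_2 = 5/6$. A vertex-cover-style analysis charges each deletion to disagreeing distance-$2$ pairs (the two ``other'' neighbours of a deleted vertex), reducing the required four-variable orthant integral to a one-dimensional $\arccos$ evaluation via the Sheppard formula and producing the factor $\arccos(5/6)/\pi$ in place of $\arccos(2\sqrt{2}/3)/\pi$. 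Designing the deletion rule so that this charging is essentially tight — i.e.\ so that two monochromatic edges sharing a vertex cost one deletion rather than two — is the subtle step that forces the appearance of $\rho_2$ rather than $\rho_1$ in the final bound.
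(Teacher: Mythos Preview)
Your proposal takes a different route from the paper and leaves the decisive step unproven. You colour by $\operatorname{sign}(X_v)$ and then try to delete a small vertex cover of the monochromatic edges; you correctly observe that the naive edge-count bound gives only $\tfrac{3}{2}\arccos(2\sqrt{2}/3)/\pi \approx 0.162$, which misses the target $\tfrac{3}{4\pi}\arccos(5/6) \approx 0.140$. The ``vertex-cover-style charging'' meant to close this gap is never actually carried out: no specific deletion rule is analysed to completion, no orthant probability is evaluated, and the paragraph ends by calling the very step that would finish the proof ``the subtle step''. As written, this is a heuristic for where the $5/6$ ought to come from, not a proof.

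The paper's argument is different and sidesteps this difficulty entirely. It does not break monochromatic edges one at a time. Instead it sets the threshold $\tau = 0$, so that each colour class $\{X_v \le 0\}$ and $\{X_v > 0\}$ decomposes into finite tree components (Theorem~\ref{thm:fin_comp}). From each such tree it takes the larger of the two bipartition classes, obtaining an independent set $I^-$ inside the negative vertices and a disjoint independent set $I^+$ inside the positive vertices; their union is the induced bipartite subgraph. The size bound comes from~\eqref{eq:prob_sum}: with $\tau=0$ one has $p_0=\tfrac12$, and the eigenvector equation makes $p_1$ explicit. Since $\la<0$, the event $\{X_{v_1},X_{v_2},X_{v_3}>0\}$ already forces $X_{v_0}<0$, so
\[
p_1 = P\bigl(X_{v_1}>0,\ X_{v_2}>0,\ X_{v_3}>0\bigr)
= \tfrac12 - \tfrac{3}{4\pi}\arccos(\si_2)
= \tfrac12 - \tfrac{3}{4\pi}\arccos\!\left(\tfrac56\right)
\]
by the three-orthant formula~\eqref{eq:corr}. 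Each of $I^\pm$ then has density at least $\tfrac12(1-p_0)+\tfrac12 p_1$, and doubling gives exactly $1-\tfrac{3}{4\pi}\arccos(5/6)$. That is how the distance-two correlation $5/6$ enters: not via a charging argument on monochromatic edges, but as the pairwise correlation of the three neighbours in the isolated-vertex probability $p_1$.
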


To illustrate our strategy for proving Theorem \ref{thm:3reg}, 
suppose that there is a real number assigned to 
each vertex of $G$, called the value of the vertex. 
We always get an independent set by choosing those vertices
having larger values than each of their neighbors.
If we assign these values to the vertices in some random manner,
then we get a random independent set.
If the expected size of this random independent set can be computed,
then it gives a lower bound on the independence ratio.
In many cases, the probability that a given vertex is chosen is
the same for all vertices, in which case this probability itself is a lower bound.

The idea is to consider a random assignment that is
\emph{almost} an eigenvector (with high probability)
with some negative eigenvalue $\la$.
Then we expect many of the vertices with positive values to be chosen.
The spectrum of the $d$-regular tree is $[-2\sqrt{d-1}, 2\sqrt{d-1}]$,
so it is reasonable to expect that
we can find such a random assignment for $\la = -2\sqrt{d-1}$.
As we will see, the approach described above can indeed be carried out,
and it produces a lower bound 
$$ \frac{1}{2} - \frac{3}{4\pi} \arccos\left( 
\frac{1+2\sqrt{2}}{4} \right) \approx 0.4298 $$ 
in the $d=3$ case. This natural bound is already sharper 
than all previous bounds that are not computer-assisted. 

Using the same random assignment but a more sophisticated way
to choose the vertices for our independent set provides a better bound.
We fix some threshold $\tau \in \IR$, and 
we only keep those vertices that are below this threshold.
We choose $\tau$ in such a way that the components of the remaining vertices
are small with high probability. We omit the large components
and we choose an independent set from each of the small components.
(Note that the small components are all trees provided that
the girth of the original graph is large enough.
Since trees are bipartite, they have an independent set
containing at least half of the vertices.)
We simulated this random procedure on computer and
the probability that a given vertex is in the independent set
seems to be above $0.438$ in the $3$-regular case.
The best bound we could obtain with a rigorous proof is $0.4361$.
The proof is computer-assisted in the sense
that we used a computer to find certain numerical integrals.

If one wants to avoid using computers,
then one can set $\tau = 0$ and use simple estimates
to obtain a bound as good as $0.43$.
(The best previous bound obtained without the use of computers
is $0.4139$ and is due to Shearer, see \cite[Table 1]{lauer_wormald}.)
Note that one can also choose an independent set
from the vertices above the threshold in the same manner.
This other independent set is clearly disjoint from the first one
and has the same expected size when $\tau = 0$.
This is how Theorem \ref{thm:ind_bipartite} will be obtained.

\subsection*{Random processes on the regular tree}

Instead of working on finite graphs with large girth,
it will be more convenient for us to consider the regular (infinite) tree
and look for independent sets on this tree that are i.i.d.\ factors.

Let $T_d$ denote the $d$-regular tree for some positive integer $d \geq 3$,
$V(T_d)$ is the vertex set, and $\Aut(T_d)$ is the group of graph automorphisms of $T_d$.
%
Suppose that we have independent standard normal random variables $Z_v$
assigned to each vertex $v \in V(T_d)$.
We call an instance of an assignment a configuration.
A \emph{factor of i.i.d.\ independent set} is a random independent set
that is obtained as a measurable function of the configuration
and that commutes with the natural action of $\Aut(T_d)$.
By a \emph{factor of i.i.d.\ process} we mean random variables $X_v$, $v \in V(T_d)$
that are all obtained as measurable functions of the random variables $Z_v$
and that are $\Aut(T_d)$-invariant
(that is, they commute with the natural action of $\Aut(T_d)$).
Actually, in this paper we will only consider
\emph{linear factor of i.i.d.\ processes} defined as follows.
\begin{definition} \label{def:linear_factor}
We say that a process $X_v$, $v \in V(T_d)$ is a \emph{linear factor} of the
i.i.d.\ process $Z_v$ if there exist real numbers $\al_0, \al_1, \ldots$ such that
\begin{equation} \label{eq:lin}
X_v = \sum_{u \in V(T_d)} \al_{ d(v,u) } Z_u =
\sum_{k=0}^\infty \sum_{u: d(v,u) = k} \al_k Z_u ,
\end{equation}
where $d(v,u)$ denotes the distance between the vertices $v$ and $u$ in $T_d$.
Note that the infinite sum in \eqref{eq:lin} converges almost surely
if and only if $\al_0^2 + \sum_{k=1}^\infty d(d-1)^{k-1} \al_k^2 < \infty$.
\end{definition}
These linear factors are clearly $\Aut(T_d)$-invariant.
Furthermore, the random variable $X_v$ defined in \eqref{eq:lin} is always a centered Gaussian.
\begin{definition} \label{def:gaussian_process}
We call a collection of random variables $X_v$, $v \in V(T_d)$
a \emph{Gaussian process on $T_d$} if they are jointly Gaussian
and each $X_v$ is centered.
(Random variables are said to be jointly Gaussian
if any finite linear combination of them is Gaussian.)

Furthermore, we say that a Gaussian process $X_v$ is invariant if
it is $\Aut(T_d)$-invariant, that is, for arbitrary graph automorphism
$\Phi: V(T_d) \to V(T_d)$ of $T_d$ the joint distribution of
the Gaussian process $X_{\Phi(v)}$ is the same as that of the original process.
\end{definition}

The following invariant processes will be of special interest for us.
\begin{introthm} \label{thm:gaussian_ev}
For any real number $\la$ with $|\la| \leq d$
there exists a non-trivial invariant Gaussian process $X_v$ on $T_d$
that satisfies the eigenvector equation with eigenvalue $\la$,
i.e., (with probability $1$) for every vertex $v$ it holds that
$$ \sum_{u \in N(v)} X_u = \la X_v ,$$
where $N(u)$ denotes the set of neighbors of $v$.

The joint distribution of such a process is unique
under the additional condition that the variance of $X_v$ is $1$.
We will refer to this (essentially unique) process as
the \emph{Gaussian wave function} with eigenvalue $\la$.
\end{introthm}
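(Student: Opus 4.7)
The plan is to work with the covariance of the process, reduce the problem to a two-term linear recurrence in the distance variable, and extract both uniqueness and existence from that recurrence.

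Since a centered Gaussian process is determined by its covariance, and invariance under $\Aut(T_d)$ (which acts transitively on vertex pairs at any fixed distance) forces $\cov(X_u,X_v)$ to depend only on $d(u,v)$, we may write $\cov(X_u,X_v) = f(d(u,v))$ for some $f : \mathbb{Z}_{\ge 0} \to \mathbb{R}$. Taking the covariance of both sides of the eigenvector equation with an arbitrary $X_w$ and splitting the $d$ neighbors of $v$ according to their distance to $w$ gives the boundary condition $d\, f(1) = \la f(0)$ together with the recurrence $f(k-1) + (d-1) f(k+1) = \la f(k)$ for $k \ge 1$. Normalizing $f(0) = 1$ forces $f(1) = \la/d$ and then determines $f(k)$ for all $k$, which proves uniqueness.

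For existence I would define $f$ by this recurrence and build a centered Gaussian process with covariance $K(u,v) := f(d(u,v))$ via Kolmogorov extension, provided $K$ is positive semidefinite on $V(T_d)\times V(T_d)$. In the tempered range $|\la|\le 2\sqrt{d-1}$, writing $\la = 2\sqrt{d-1}\cos\theta$, the solution is the explicit spherical function $f(k) = (d-1)^{-k/2}\bigl(\cos k\theta + c_\theta \sin k\theta\bigr)$, and positive definiteness follows by exhibiting $f$ as the Fourier transform of the Kesten--McKay measure on the adjacency spectrum. In the complementary range $2\sqrt{d-1} < |\la| \le d$ the characteristic roots of the recurrence are real and $f$ is geometrically decaying; here positive definiteness comes from the complementary series representation of $\Aut(T_d)$ (Cartier; Fig\`a-Talamanca--Nebbia), or, alternatively, from a direct spectral computation on balls. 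Establishing positive definiteness uniformly throughout $|\la|\le d$, especially in the complementary range and at the endpoints $|\la| = d$, is the main obstacle.

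Once the Gaussian process exists, the eigenvector equation holds almost surely by a short second-moment computation: $Y_v := \sum_{u\in N(v)} X_u - \la X_v$ is a centered Gaussian, and
\[
\mathbb{E}[Y_v^2] = d f(0) + d(d-1) f(2) - 2\la\, d\, f(1) + \la^2 f(0),
\]
which vanishes after substituting $d f(1) = \la f(0)$ and the $k=1$ instance of the recurrence. Hence $Y_v = 0$ a.s.\ at each $v$, and a countable union over $V(T_d)$ delivers the eigenvector equation at every vertex simultaneously.
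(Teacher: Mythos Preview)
Your uniqueness argument coincides with the paper's: both derive the recurrence \eqref{eq:cov_rec} for the covariances and observe that it has a unique solution once $f(0)=1$ is fixed. Your variance computation showing $\mathbb{E}[Y_v^2]=0$ is also correct and is a clean way to verify the eigenvector equation once the process exists.

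Where you diverge from the paper is in the existence step. You propose to prove directly that the kernel $K(u,v)=f(d(u,v))$ is positive semidefinite, splitting into the tempered range (principal series / Kesten--McKay) and the complementary range (complementary series, via Cartier or Fig\`a-Talamanca--Nebbia). This is a legitimate route and the conclusion is correct, but you yourself flag the main obstacle and then defer it to the literature; also, the phrase ``exhibiting $f$ as the Fourier transform of the Kesten--McKay measure'' is not quite right---the spherical function $\phi_\la$ for a single $\la$ is a matrix coefficient of a unitary representation, not the transform of the full spectral measure. The paper instead sidesteps positive definiteness entirely by a short, self-contained recursive construction: given the process on a connected finite set $S$ and a leaf $v_0$ with inner neighbor $v_d$, it defines the remaining neighbors as
\[
X_{v_i} = \frac{\la}{d-1} X_{v_0} - \frac{1}{d-1} X_{v_d} + Y_i,
\]
and reduces everything to checking that a single explicit $(d-1)\times(d-1)$ matrix (with $a$ on the diagonal and $b$ off it) is positive semidefinite, which holds exactly when $|\la|\le d$. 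This buys two things your approach does not: an elementary proof with no appeal to harmonic analysis on $\Aut(T_d)$, and---crucially for the rest of the paper---an explicit Markov field property (the process on the two sides of an edge is conditionally independent given the edge values), which is used repeatedly in Sections~\ref{sec:2} and~\ref{sec:4}.
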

%

These Gaussian wave functions can be approximated
by linear factor of i.i.d.\ processes
provided that $|\la| \leq 2\sqrt{d-1}$.
\begin{introthm} \label{thm:appr}
For any real number $\la$ with $|\la| \leq 2\sqrt{d-1}$
there exist linear factor of i.i.d.\ processes that converge in distribution
to the Gaussian wave function corresponding to $\la$.
\end{introthm}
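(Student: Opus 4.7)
\emph{Proof plan.} The strategy is to reduce convergence in distribution to pointwise convergence of the radial covariance functions, then construct the approximating factors via spectral calculus of the adjacency operator on $T_d$. Since both the target wave function and every linear factor of i.i.d.\ are $\Aut(T_d)$-invariant centered Gaussian processes, each is determined by its radial covariance $\rho(k) := \mathbb{E}[X_vX_w]$ at $d(v,w) = k$. Hence convergence in distribution on $\mathbb{R}^{V(T_d)}$ with the product topology is equivalent to $\rho^{(n)}(k) \to \rho_\lambda(k)$ for every $k \geq 0$. Multiplying the eigenvalue equation of Theorem~\ref{thm:gaussian_ev} by $X_w$ and taking expectations yields the three-term recursion $\rho_\lambda(k-1) + (d-1)\rho_\lambda(k+1) = \lambda\,\rho_\lambda(k)$ with $\rho_\lambda(0) = 1$ and $\rho_\lambda(1) = \lambda/d$; for $|\lambda|\leq 2\sqrt{d-1}$, writing $\lambda = 2\sqrt{d-1}\cos\theta$, this is the classical spherical function $\phi_\lambda(k) = (d-1)^{-k/2}\bigl(\cos k\theta + \tfrac{d-2}{d}\cot\theta\,\sin k\theta\bigr)$.

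Let $A$ denote the adjacency operator on $\ell^2(V(T_d))$ and $\mu_K$ its Kesten--McKay spectral measure at $\delta_v$, supported on $[-2\sqrt{d-1},2\sqrt{d-1}]$. For any real polynomial $p$, the operator $p(A)$ is $\Aut(T_d)$-invariant, so $(p(A))_{vu}$ depends only on $d(v,u)$ and defines a finitely supported radial coefficient sequence; consequently $Y_v := (p(A)Z)_v$ is a linear factor of i.i.d., and its covariance satisfies
\begin{equation*}
\mathbb{E}[Y_vY_w] \;=\; (p(A)^2)_{vw} \;=\; \int_{-2\sqrt{d-1}}^{2\sqrt{d-1}} p(t)^2\,\phi_t(d(v,w))\,d\mu_K(t),
\end{equation*}
by the Plancherel/spherical-function formula for the Gelfand pair $(\Aut(T_d),\mathrm{Stab}(v))$. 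After normalization, $X^{(n)}_v := (p_n(A)Z)_v/\bigl(\int p_n^2\,d\mu_K\bigr)^{1/2}$ is a unit-variance invariant Gaussian linear factor whose covariance at distance $k$ equals $\int \phi_t(k)\,d\nu_n(t)$, where $\nu_n := p_n^2\,d\mu_K / \int p_n^2\,d\mu_K$ is a probability measure on the spectrum.

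It therefore suffices to produce polynomials $p_n$ with $\nu_n \Rightarrow \delta_\lambda$ weakly; since $t \mapsto \phi_t(k)$ extends continuously to the closed spectrum (the $\cot\theta$ singularity is cancelled by the $\sin k\theta$ factor as $\theta \to 0$), this gives $\rho^{(n)}(k) \to \phi_\lambda(k)$ for every $k$. For $|\lambda|<2\sqrt{d-1}$ this is routine: $\mu_K$ has a continuous strictly positive density in a neighborhood of $\lambda$, so I pick a continuous nonnegative $h_n$ supported in a $1/n$-neighborhood of $\lambda$ with $\int h_n^2\,d\mu_K = 1$, then approximate $h_n$ uniformly on the compact spectrum by a polynomial $p_n$ via Stone--Weierstrass.

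The delicate case --- and the one actually used in the rest of the paper to produce independent sets --- is the endpoint $\lambda = \pm 2\sqrt{d-1}$, where the density of $\mu_K$ vanishes like $\sqrt{2\sqrt{d-1}-|t|}$. The bumps $h_n$ must then grow of order $n^{3/4}$ in a $1/n$-neighborhood of $\lambda$ so that $\int h_n^2\,d\mu_K = 1$, but they remain continuous on the compact spectrum and are therefore still uniformly polynomial-approximable. The main technical step is to check that the sup-norm polynomial-approximation error is controlled relative to these diverging peak heights, so that the normalizing constants and the covariance integrals converge as claimed; this is the principal obstacle and requires a careful quantitative balancing of $h_n$ and the approximation accuracy. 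Once this is done the endpoint argument is identical to the interior one, and the theorem follows.
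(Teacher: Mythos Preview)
Your approach is correct and genuinely different from the paper's. The paper works entirely in ``physical space'': it writes down explicit radial coefficients $\alpha_k = \rho^k (d-1)^{-k/2}\beta_k$, where $(\beta_k)$ is a bounded solution of the recurrence obtained from the eigenvector equation, and checks by hand that as $\rho \to 1^-$ the resulting vector $f$ has $\|A_{T_d} f - \lambda f\|_2 / \|f\|_2 \to 0$; this forces the covariance sequence of the associated linear factor to satisfy the wave-function recurrence \eqref{eq:cov_rec} up to vanishing error, and hence to converge pointwise to the target. Your argument lives on the spectral side: you realize finitely supported linear factors as $p(A)Z$, compute their covariances via the spherical Plancherel formula as $\int p(t)^2\,\phi_t(k)\,d\mu_K(t)$, and reduce everything to producing probability measures $p_n^2\,d\mu_K / \int p_n^2\,d\mu_K$ that converge weakly to $\delta_\lambda$. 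The paper's route is elementary, self-contained, and hands you the approximating coefficients in closed form; yours is more conceptual, makes the role of the spectral interval $[-2\sqrt{d-1},2\sqrt{d-1}]$ transparent, and would transfer verbatim to any vertex-transitive graph whose Plancherel measure and spherical functions are known.

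One simplification you are missing: you work harder than necessary at the endpoint $\lambda = \pm 2\sqrt{d-1}$. The paper dispatches this case in one line, observing that it ``clearly follows'' from the open-interval case by continuity. The same reduction is available in your framework: you have already noted that $t \mapsto \phi_t(k)$ is continuous on the closed spectrum, so once the interior case is done, a diagonal argument over $\lambda_n \to \lambda$ immediately yields the endpoint. Your direct construction with bumps of height $\sim n^{3/4}$ and a quantitative Weierstrass approximation is not wrong, but it is an unnecessary detour.
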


The Gaussian wave function for negative $\lambda$ 
has negative correlations for neighbors. 
The set where the process takes values below a threshold $\tau$ 
is a percolation process, which -- with the right choice of parameters -- 
has high density but no infinite clusters. 
We will use this percolation to construct independent sets. 
Our first step, of independent interest, 
is to bound the critical threshold for this percolation. 
\begin{introthm} \label{thm:fin_comp}
Let $X_v$, $v \in T_3$ be the Gaussian wave function with eigenvalue $\la=-2\sqrt{2}$, 
and consider the percolation $S_\tau = \left\{ v \in V(T_d) \ : \ X_v \leq \tau \right\}$. 
If $\tau \leq 0.086$, then each cluster of $S_\tau$ is finite almost surely. 
(Note that for $\tau = 0.086$ the density of the percolation is above $0.534$, 
yet the clusters are finite almost surely.)
\end{introthm}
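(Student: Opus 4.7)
The plan is a first-moment branching process argument. Fix a vertex $v_0 \in V(T_3)$ and let $N_n$ denote the number of vertices at tree-distance $n$ from $v_0$ that lie in the same $S_\tau$-cluster as $v_0$. Since the cluster of $v_0$ is infinite only if $N_n \ge 1$ for every $n$, by Markov's inequality and countability of $V(T_3)$ it suffices to prove $\mathbb{E}[N_n] \to 0$. A vertex $u$ with $d(v_0,u) = n$ lies in the same cluster as $v_0$ iff all $n+1$ vertices on the unique $v_0$--$u$ path satisfy $X_v \le \tau$; by $\Aut(T_3)$-invariance this probability depends only on $n$, so for any length-$n$ path $w_0, \dots, w_n$,
\[
\mathbb{E}[N_n] = 3 \cdot 2^{n-1} \cdot p_n(\tau), \qquad p_n(\tau) := \Pr\bigl(X_{w_0} \le \tau, \dots, X_{w_n} \le \tau\bigr).
\]

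The joint law of $(X_{w_0}, \dots, X_{w_n})$ is centered Gaussian with covariance $\rho_{|i-j|}$, where $\rho_k = \mathrm{Cov}(X_v, X_u)$ at tree-distance $k$. Multiplying the eigenvector equation $\sum_{u \in N(v)} X_u = -2\sqrt{2}\, X_v$ by $X_w$ (for $w$ at distance $k$ from $v$) and taking expectations gives the two-term recursion $\rho_{k-1} + 2\rho_{k+1} = -2\sqrt{2}\,\rho_k$ for $k \ge 1$, together with $\rho_0 = 1$ and $\rho_1 = -2\sqrt{2}/3$. Since the characteristic polynomial has a double root at $-1/\sqrt{2}$, the unique solution is
\[
\rho_k = \bigl(1 + \tfrac{k}{3}\bigr)\bigl(-1/\sqrt{2}\bigr)^k,
\]
so $\sum_k |\rho_k| < \infty$ and the restriction of the wave function to any path is a stationary Gaussian sequence with summable, sign-alternating covariances.

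The heart of the argument is then to show $p_n(\tau) \cdot 2^n \to 0$ for $\tau \le 0.086$. I would use the chain-rule decomposition $p_n(\tau) = \prod_{k=0}^{n} \Pr\bigl(X_k \le \tau \mid X_0, \dots, X_{k-1} \le \tau\bigr)$ together with the $\mathrm{AR}(\infty)$ representation of the stationary sequence: conditionally on the past, $X_k$ is Gaussian with linear mean $\sum_{i<k} a_{k,i} X_i$ (coefficients $a_{k,i}$ convergent as $k \to \infty$) and variance $\sigma_k^2 \to \sigma_\infty^2 > 0$. The persistence-ratio $p_n(\tau)/p_{n-1}(\tau)$ can then be written as a Gaussian integral over the truncated domain $\{X_i \le \tau\}$; verifying by computer-assisted numerical integration that this ratio is bounded by some constant strictly below $1/2$ at $\tau = 0.086$ gives geometric decay of $\mathbb{E}[N_n]$, and the theorem follows.

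The main obstacle is pushing the numerical bound to a threshold as large as $0.086$. Because $|\rho_1| = 2\sqrt{2}/3 \approx 0.943$, the Gaussian law along a path is close to being concentrated on the "anti-correlated" state where consecutive entries have opposite signs, so any crude one-step Markov surrogate wildly overestimates $p_n/p_{n-1}$. The $\mathrm{AR}$ coefficients decay only geometrically at rate $1/\sqrt{2}$, so a bona fide multi-step numerical computation, in the spirit of the paper's other computer-assisted estimates, is needed to extract the sharp persistence decay rate.
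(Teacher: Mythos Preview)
Your first-moment/branching skeleton is exactly the paper's Lemma~\ref{lem:suff}: bound the expected number of surviving descendants at depth $n$ and use Markov. Your covariance computation $\rho_k=(1+k/3)(-1/\sqrt{2})^k$ is also correct.

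The gap is that you treat the path restriction as a generic stationary Gaussian sequence with an $\mathrm{AR}(\infty)$ representation whose coefficients ``decay only geometrically at rate $1/\sqrt{2}$''. This is false, and missing the correct structure is what makes your final step vague. The Gaussian wave function on $T_3$ has the Markov field property (Remark~\ref{rm:ind}): conditioning on the values at two adjacent vertices makes the two sides of the tree independent. Restricted to a path this says the process is \emph{exactly} $2$-Markov; concretely (Remark~\ref{rm:3reg_gen})
\[
X_{k+1} \;=\; -\sqrt{2}\,X_k \;-\; \tfrac12\,X_{k-1} \;+\; \tfrac{1}{2\sqrt{3}}\,Z_k,
\]
with $Z_k$ standard normal and independent of $(X_0,\dots,X_k)$. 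So the $\mathrm{AR}$ coefficients are $(-\sqrt{2},-\tfrac12,0,0,\dots)$, not an infinite tail.

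This is precisely the leverage the paper uses. Instead of bounding the limit of $p_n/p_{n-1}$ (which mixes conditioning on an event of growing dimension), one conditions on the \emph{values} $X_{k-1}=x,\ X_k=y$ and bounds, uniformly over $x,y\le\tau$, the two-step survival probability
\[
\Pr\bigl(X_{k+1}\le\tau,\ X_{k+2}\le\tau \,\bigm|\, X_{k-1}=x,\ X_k=y\bigr)\;<\;\tfrac14.
\]
By the $\mathrm{AR}(2)$ formula this is a probability in the two independent Gaussians $Z_k,Z_{k+1}$, i.e.\ a genuinely two-dimensional integral depending on the parameters $(x,y)$. A monotonicity observation lets one fix $y=\tau$, reducing to a one-parameter maximization of a single $2$D Gaussian integral, which at $\tau=0.086$ comes out $\approx 0.24996<1/4$. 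Chaining these uniform two-step bounds gives $p_{2s}(\tau)\le (1/4)^{s-O(1)}$, hence $\mathbb{E}[N_n]\to 0$.

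So your outline is salvageable, but the ``multi-step numerical computation'' you anticipate is unnecessary: recognize the $\mathrm{AR}(2)$ structure, condition on two values rather than on the whole past event, and the problem collapses to a tractable low-dimensional optimization.
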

Asymptotically, for large values of $d$, 
Gamarnik and Sudan \cite{gamarnik} have recently showed that 
factor of i.i.d.\ processes can only produce independent sets 
with size at most $1/2+1/\sqrt{8}$ times the largest in random regular graphs. 
This means that upper bounds coming from random regular graphs 
(such as the Bollob\'as and McKay bounds) 
cannot be matched by factor of i.i.d.\ algorithms. 

For $d=3$, it is an open problem 
whether the best asymptotic independence ratio 
can be achieved with factor-of-i.i.d.\ algorithms such as ours. 

The rest of the paper is organized as follows:
in Section \ref{sec:2} we prove Theorems \ref{thm:gaussian_ev} and \ref{thm:fin_comp}, 
and derive other useful properties of Gaussian wave functions, 
in Section \ref{sec:3} we give a proof for Theorem \ref{thm:appr}, 
and in Section \ref{sec:4} we show how one can use these 
random processes to find large independent sets.



\section{Gaussian wave functions} \label{sec:2}

We call the random variables $X_v$, $v \in V(T_d)$ a Gaussian process
if they are jointly Gaussian and each $X_v$ is centered
(see Definition \ref{def:gaussian_process}).
The joint distribution is completely determined
by the covariances $\cov(X_u,X_v)$, $u,v \in V(T_d)$.
A Gaussian process with prescribed covariances exists if and only if
the corresponding infinite ``covariance matrix'' is positive semidefinite.

From this point on, all the Gaussian processes considered will be $\Aut(T_d)$-invariant.
For an invariant Gaussian process $X_v$
the covariance $\cov(X_u,X_v)$ clearly depends
only on the distance $d(u,v)$ of $u$ and $v$.
(The distance between the vertices $u,v$ is
the length of the shortest path connecting $u$ and $v$ in $T_d$.)
Let us denote the covariance corresponding to distance $k$ by $\si_k$.
So an invariant Gaussian process is determined (in distribution)
by the the sequence $\si_0, \si_1, \ldots$ of covariances.

Theorem \ref{thm:gaussian_ev} claims that for any $|\la| \leq d$
there exists an invariant Gaussian process that satisfies the eigenvector equation
$ \sum_{u \in N(v)} X_u = \la X_v $ for each vertex $v$.
What would be the covariance sequence of such a \emph{Gaussian wave function}?
Let $v_1, \ldots, v_d$ denote the neighbors of an arbitrary vertex $v_0$. Then
$$ 0 = \cov\left( X_{v_0}, 0 \right) =
\cov\left( X_{v_0}, X_{v_1} + \cdots + X_{v_d} - \la X_{v_0} \right) =
d \si_1 - \la \si_0 .$$
Also, if $u$ is at distance $k$ from $v_0$,
then it has distance $k-1$ from one of the neighbors $v_1,\ldots,v_d$,
and has distance $k+1$ from the remaining $d-1$ neighbors of $v_0$. Therefore
$$ 0 = \cov\left( X_{u}, 0 \right) =
\cov\left( X_{u}, X_{v_1} + \cdots + X_{v_d} - \la X_{v_0} \right) =
(d-1) \si_{k+1} + \si_{k-1} - \la \si_k .$$
After multiplying our process with a constant
we may assume that the variance of $X_v$ is $1$, that is, $\si_0 = 1$.
So the covariances satisfy the following linear recurrence relation:
\begin{equation} \label{eq:cov_rec}
\si_0 = 1; \ d \si_1 - \la \si_0 = 0; \
(d-1) \si_{k+1} - \la \si_k + \si_{k-1} = 0 ,\ k \geq 1.
\end{equation}
There is a unique sequence $\si_k$ satisfying the above recurrence.
Therefore to prove the existence of the Gaussian wave function
we only need to check that the corresponding infinite matrix is positive semidefinite.
This does not seem to be a straightforward task, though,
so we take another approach instead,
where we recursively consruct the Gaussian wave function.
(This approach will also yield some interesting and useful properties
of Gaussian wave functions, see Remark \ref{rm:ind} and \ref{rm:3reg_gen}.)
\begin{remark}
The case $|\la| \leq 2\sqrt{d-1}$ also follows
from the results presented in the next section,
where we construct factor of i.i.d.\ processes,
the covariance matrices of which converge to
the ``covariance matrix'' of the (supposed) Gaussian wave function.
As the limit of positive semidefinite matrices,
this ``covariance matrix'' is positive semidefinite, too,
and thus the Gaussian wave function indeed exists.
\end{remark}
\begin{proof}[Proof of Theorem \ref{thm:gaussian_ev}]
Let $\si_k$ be the solution of the recurrence relation \eqref{eq:cov_rec},
in particular,
$$ \si_0 = 1; \ \si_1 = \frac{\la}{d}; \ \si_2 = \frac{\la^2-d}{d(d-1)} .$$
We need to find a Gaussian process $X_v$, $v \in V(T_d)$ such that
\begin{equation} \label{eq:prescr}
\cov( X_u, X_v ) = \si_{d(u,v)}
\end{equation}
holds for all $u,v \in V(T_d)$.

We will define the random variables $X_v$ recursively
on larger and larger connected subgraphs of $T_d$.
Suppose that the random variables $X_v$ are already defined for $v \in S$
such that \eqref{eq:prescr} is satisfied for any $u,v \in S$,
where $S$ is a (finite) set of vertices for which the induced subgraph $T_d[S]$ is connected.
Let $v_0$ be a leaf (i.e., a vertex with degree $1$) in $T_d[S]$,
$v_d$ denotes the unique neighbor of $v_0$ in $T_d[S]$,
and $v_1, \ldots, v_{d-1}$ are the remaining neighbors in $T_d$.
We now define the random variables $X_{v_1}, \ldots, X_{v_{d-1}}$.
Let $(Y_1, \ldots, Y_{d-1})$ be a multivariate Gaussian
that is independent from $X_v$, $v \in S$ and
that has a prescribed covariance matrix that we will specify later. Set
$$ X_{v_i} \mathdef \frac{\la}{d-1} X_{v_0} - \frac{1}{d-1} X_{v_d} + Y_i
\ ,\ i=1,\ldots,d-1 .$$
For $1 \leq i \leq d-1$ we have
$$ \cov\left( X_{v_i}, X_{v_0}\right) = \frac{\la}{d-1} - \frac{1}{d-1} \si_1 = \si_1 ,$$
and if $u \in S \sm \{ v_0 \}$ is at distance $k \geq 1$ from $x_0$, then
$$ \cov\left( X_{v_i}, X_u \right) =
\frac{\la}{d-1} \si_k - \frac{1}{d-1} \si_{k-1} = \si_{k+1} .$$
We also need that
\begin{equation} \label{eq:needed}
\var\left( X_{v_i} \right) = \si_0 \ \mbox{ and }
\cov\left( X_{v_i}, X_{v_j}\right) = \si_2
\mbox{, whenever } 1 \leq i,j \leq d-1,\ i \neq j .
\end{equation}
Since
\begin{align*}
\var\left( X_{v_i} \right) &= \left( \frac{\la}{d-1} \right)^2 +
\left( \frac{1}{d-1} \right)^2 - \frac{2\la}{(d-1)^2} \si_1 + \var(Y_i) \mbox{ and }\\
\cov\left( X_{v_i}, X_{v_j}\right) &= \left( \frac{\la}{d-1} \right)^2 +
\left( \frac{1}{d-1} \right)^2 - \frac{2\la}{(d-1)^2} \si_1 + \cov(Y_i,Y_j) ,
\end{align*}
we can set $\var(Y_i)$ and $\cov(Y_i,Y_j)$
such that \eqref{eq:needed} is satisfied, namely let
$$ \var(Y_i) = a \mathdef \frac{(d-2)(d^2-\la^2)}{d(d-1)^2}
\mbox{ and } \cov(Y_i,Y_j) = b \mathdef \frac{-(d^2-\la^2)}{d(d-1)^2} .$$
We still have to show that there exist Gaussians $Y_1, \ldots, Y_{d-1}$
with the above covariances. The corresponding $(d-1) \times (d-1)$ covariance matrix
would have $a$'s in the main diagonal and $b$'s everywhere else.
The eigenvalues of this matrix are $a+(d-2)b$ and $a-b$
(with $a-b$ having multiplicity $d-2$).
Therefore the matrix is positive semidefinite
if $a \geq b$ and $a \geq -(d-2)b$.
It is easy to check that these inequalities hold when $|\la| \leq d$.
(In fact, $a = -(d-2)b$, so the covariance matrix is singular,
which means that there is some linear dependence between $Y_1, \ldots, Y_{d-1}$.
Actually, this linear dependence is $Y_1 + \cdots + Y_{d-1} = 0$,
and that is why the eigenvector equation $X_{v_1} + \cdots + X_{v_d} = \la X_{v_0}$ holds.)

So the random variables $X_v$ are now defined on the larger set
$S' = S \cup \{v_1, \ldots, v_{d-1} \}$ such that
\eqref{eq:prescr} is satisfied for any $u,v \in S'$. Since
$$
\begin{pmatrix}
  1 & \si_1  \\
  \si_1 & 1
\end{pmatrix}
$$
is positive semidefinite for $|\la| \leq d$, we can start with
a set $S$ containing two adjacent vertices, and then in each step
we can add the remaining $d-1$ vertices of a leaf to $S$.
The statement then follows from the Kolmogorov extension theorem.
\end{proof}
\begin{remark}[\emph{Markov field property}] \label{rm:ind}
There is an important consequence of the proof above,
which we will make use of when we will be computing
the probability of certain configurations for
a particular Gaussian wave function in Section \ref{sec:4}.
Let $u$ and $v$ be adjacent vertices in $T_d$.
They cut $T_d$ (and thus the Gaussian wave function on it) into two parts.
Our proof yields that the two parts of the process are independent under
the condition $X_u = x_u; X_v = x_v$ for any real numbers $x_u, x_v$.
\end{remark}
\begin{remark} \label{rm:3reg_gen}
If $d=3$ and $\la=-2\sqrt{d-1}=-2\sqrt{2}$, then we have $Y_2 = -Y_1$ in the above proof
with $\var( Y_1 ) = a = 1/12$. So we can express $X_{v_1}$ and $X_{v_2}$
with the standard Gaussian $Z = 2\sqrt{3} Y_1$ as follows:
\begin{align*}
X_{v_1} &= -\sqrt{2} X_{v_0} - \frac{1}{2} X_{v_3} + \frac{1}{ 2\sqrt{3} } Z \ \mbox{ and}\\
X_{v_2} &= -\sqrt{2} X_{v_0} - \frac{1}{2} X_{v_3} - \frac{1}{ 2\sqrt{3} } Z .
\end{align*}
Note that $Z$ is independent from the random variables $X_v, v \in S$,
in particular, it is independent from $X_{v_0}$, $X_{v_3}$.
\end{remark}

\subsection{Percolation corresponding to Gaussian wave functions}

Let $X_v$, $v \in V(T_d)$ be some fixed invariant process on $T_d$.
For any $\tau \in \IR$ we define
$$ S_\tau \mathdef \left\{ v \in V(T_d) \ : \ X_v \leq \tau \right\} ,$$
that is, we throw away the vertices above some threshold $\tau$.
(If the random variables $X_v$ are independent,
then we get the Bernoulli site percolation.
Otherwise $S_\tau$ is a \emph{dependent percolation}.)
One very natural question about this random set $S_\tau$ is
whether its components are finite almost surely or not.
Clearly, there exists a \emph{critical threshold} $\tau_c \in [-\infty, \infty]$ such that
for $\tau < \tau_c$ the component of any given vertex is finite almost surely,
while if $\tau > \tau_c$, then any given vertex is in 
an infinite component with some positive probability.

First we explain why it would be extremely useful for us
to determine this critical threshold (or bound it from below).
Let $\tau$ be below the critical threshold $\tau_c$ and
let $I_\tau$ be the ``largest'' independent set contained by $S_\tau$.
More precisely, we choose the largest independent set in each of
the (finite) components of $S_\tau$ and consider their union.
If the largest independent set is not unique, then we choose one in some invariant way.
This way we get an invariant independent set $I_d$.
(Moreover, if $X_v$ can be approximated by i.i.d.\ factors, then so is $I_d$.)
Clearly, 
the larger $\tau$ is, the larger the independent set we get.
So we want to pick $\tau$ close to the critical threshold.

The next lemma provides a sufficient condition for the components to be finite
in the case when our process $X_v$ is a Gaussian wave function.
Let us fix a path in $T_d$ containing $m+2$ vertices for some positive integer $m$ 
and fix the values assigned to the first and second vertex: $x$ and $y$, respectively. 
The sufficient condition 
is roughly the following: for any $x,y \leq \tau$,
the conditional probability of the event 
that the random values assigned to the remaining $m$ nodes 
are also below $\tau$ is less than $1/(d-1)^m$. 
In fact, the only thing that we will use about Gaussian wave functions is
the Markov field property pointed out in Remark \ref{rm:ind}.
\begin{lemma} \label{lem:suff}
Let $X_v, v \in T_d$ be a Gaussian wave function on $T_d$
and let $v_{-1}, v_0, v_1, \ldots, v_m$ be any fixed path in $T_d$
containing $m+2$ vertices for some positive integer $m$.
Suppose that there exists a real number $c < 1/(d-1)^m$ such that
$$ P\left( X_{v_i} \leq \tau \ ,\ 1 \leq i \leq m |
X_{v_{-1}} = x_{v_{-1}}; X_{v_{0}} = x_{v_{0}} \right) < c $$
holds for any real numbers $x_{v_{-1}}, x_{v_0} \leq \tau$.
Then each component of
$$ S_\tau = \left\{ v \in V(T_d) \ : \ X_v \leq \tau \right\} \subset V(T_d) $$
is finite almost surely.
\end{lemma}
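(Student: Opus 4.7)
The plan is to bound, for a fixed vertex $r \in V(T_d)$, the probability that the component of $S_\tau$ containing $r$ is infinite, and then to conclude by a countable union bound over $V(T_d)$. Since $T_d$ is locally finite, König's lemma implies that if the component of $r$ in $S_\tau$ is infinite, then for every $N$ there is a simple path $w_0 = r, w_1, \ldots, w_N$ in $T_d$ with $X_{w_i} \leq \tau$ for $0 \leq i \leq N$. So it suffices to show that the expected number of such paths of length $N$ tends to $0$ as $N \to \infty$.

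Fix an integer $k \geq 1$ and set $N = km+1$. For a fixed simple path $w_0, w_1, \ldots, w_N$, the core estimate is
\[
P\bigl( X_{w_i} \leq \tau \text{ for all } 0 \leq i \leq N \bigr) \leq c^k .
\]
To see this, partition $w_2, \ldots, w_N$ into $k$ consecutive blocks of size $m$; the $j$-th block $B_j = \{w_{(j-1)m+2}, \ldots, w_{jm+1}\}$ is preceded along the path by the adjacent pair $\bigl(w_{(j-1)m}, w_{(j-1)m+1}\bigr)$. By the Markov field property (Remark \ref{rm:ind}), conditioning on the values at this pair makes the process on the subtree containing $B_j$ independent of the process on the other subtree. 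The hypothesis of the lemma, applied with these two vertices playing the roles of $v_{-1}$ and $v_0$, then says that on the event that both boundary values are $\leq \tau$, the conditional probability that every vertex of $B_j$ is $\leq \tau$ is strictly less than $c$, uniformly in those boundary values. Iterating this bound from $j=1$ up to $j=k$ via the tower property yields the $c^k$ estimate.

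Since the number of simple paths of length $N = km+1$ starting at $r$ is at most $d(d-1)^{km}$, the expected number of such paths lying entirely in $S_\tau$ is at most
\[
d(d-1)^{km} \cdot c^k = d\bigl[(d-1)^m c\bigr]^k ,
\]
which tends to $0$ as $k \to \infty$ because $(d-1)^m c < 1$ by assumption. Markov's inequality then gives $P(\text{some simple path of length } N \text{ lies in } S_\tau) \to 0$, so almost surely the component of $r$ is finite; summing over $r$ completes the proof.

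The main obstacle is the careful bookkeeping of the inductive step: one must check that when we condition on the two boundary vertices of $B_j$, the hypothesis applies \emph{uniformly} on the event that both values are $\leq \tau$, and that the Markov field property decouples the event \{$B_j$ entirely below $\tau$\} from \emph{all} earlier vertices along the path (not only from the two immediate boundary ones). Once this is verified, the bound multiplies cleanly across blocks and the geometric factor $[(d-1)^m c]^k$ does the rest.
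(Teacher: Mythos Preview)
Your proof is correct and follows essentially the same approach as the paper: split the path from a fixed root to a vertex at distance $km+1$ into $k$ overlapping segments of $m+2$ vertices, use the Markov field property (Remark~\ref{rm:ind}) together with the hypothesis to bound the probability of staying below $\tau$ along each segment by $c$, multiply to get $c^k$, and compare against the $d(d-1)^{km}$ paths via Markov's inequality. Your explicit invocation of K\"onig's lemma and the countable union bound over roots merely make standard steps more explicit than in the paper's proof.
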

\begin{proof}
Let $u$ be an arbitrary vertex and let us consider the component of $u$ in $S_\tau$.
Let $s$ be any positive integer.
We want to count the number of vertices in the component
at distance $sm+1$ from $u$.
The number of such vertices in $T_d$ is $d (d-1)^{sm}$.
For any such vertex $w$ the path from $u$ to $w$ can be split into $s$ paths, 
each having $m+2$ vertices and each overlapping with the previous and the next one
on two vertices. The Gaussian wave function on such a path depends
on the previous pathes only through the first two (overlapping)
vertices of the path (see Remark \ref{rm:ind}).
Using this fact and the assumption of the lemma,
one can conclude that the probability that $w$ is in the component
is less than $c^s$. Consequently, the expected number of vertices
in the component at distance $sm+1$ from $u$ is
at most $d (d-1)^{sm} c^s$, which is exponentially small in $s$.
Thus, by Markov's inequality, the probability that the
component has at least one vertex at distance $sm+1$ is exponentially small, too.
It follows that each component must be finite with probability $1$.
\end{proof}

Now we will use the above lemma to give a lower bound
for the critical threshold in the case $d=3$, $\la=-2\sqrt{2}$.
\begin{proof}[Proof of Theorem \ref{thm:fin_comp}]
Let $X_v$, $v \in T_3$ be the Gaussian wave function with eigenvalue $\la=-2\sqrt{2}$. 
We need to prove that $S_\tau$ has finite components almost surely for $\tau = 0.086$. 
We will use Lemma \ref{lem:suff} with $m=2$.
Let us fix a path containing four vertices of $T_3$, 
we denote the random variables assigned to the
first, second, third, and fourth vertex of the path by $X$, $Y$, $U$, and $V$, respectively.
Let $x,y$ be arbitrary real numbers not more than $\tau$.
From now on, every event and probabilitiy
will be meant under the condition $X=x; Y=y$.
According to Remark \ref{rm:3reg_gen} there exist independent
standard normal random variables $Z_1, Z_2$ such that
\begin{align*}
U &= -\sqrt{2} y - \frac{1}{2} x + \frac{1}{2\sqrt{3}} Z_1 ;\\
V &= -\sqrt{2} U - \frac{1}{2} y + \frac{1}{2\sqrt{3}} Z_2 =
\frac{3}{2} y + \frac{1}{\sqrt{2}} x -
\frac{1}{\sqrt{6}} Z_1 + \frac{1}{2\sqrt{3}} Z_2 .
\end{align*}
Our goal is to prove that the probability of $U \leq \tau; V \leq \tau$
is less than $1/4$ for any fixed $x,y \leq \tau$.
If we increase $y$ by some positive $\Delta$,
and decrease $x$ by $2\sqrt{2} \Delta$ at the same time,
then $U$ does not change, while $V$ gets smaller, 
and thus the probability in question increases. 
Thus setting $y$ equal to $\tau$ and changing $x$ accordingly
always yield a higher probability.
So from now on we will assume that $y=\tau$. Then
\begin{align*}
U \leq \tau &\Leftrightarrow Z_1 \leq \sqrt{3}x+2\sqrt{6}\tau+2\sqrt{3} \tau ;\\
V \leq \tau &\Leftrightarrow -Z_1 + \frac{1}{\sqrt{2}} Z_2 \leq
- \sqrt{3}x - \frac{\sqrt{3}}{\sqrt{2}}\tau .
\end{align*}
We notice that the sum of the right hand sides does not depend on $x$:
$$ a \mathdef \tau \left( 2\sqrt{6} + 2\sqrt{3} - \frac{\sqrt{3}}{\sqrt{2}} \right) .$$
Therefore we have to maximize the following probability in $d_1$:
$$ P\left( Z_1 \leq d_1 ; Z_2/q \leq Z_1 + a - d_1 \right) \mbox{, where } q = \sqrt{2} .$$
This can be expressed as a two-dimensional integral:
\begin{equation} \label{eq:prob_int}
f(d_1) \mathdef
\int_{-\infty}^{d_1}
\int_{-\infty}^{q (z_1 + a - d_1)}
\frac{1}{2\pi}\exp\left( - \frac{z_1^2 + z_2^2}{2} \right)
 \, \mathrm{d} z_2  \, \mathrm{d} z_1 .
\end{equation}
To find the maximum of the function $f(d_1)$, we take its derivative,
which can be expressed using the cumulative distribution function $\Phi$
of the standard normal distribution:
\begin{multline*}
f'(d_1) =
\frac{1}{\sqrt{2\pi}} \exp\left( \frac{-d_1^2}{2} \right) \Phi(qa)
- \frac{\sqrt{1+q^2}}{\sqrt{2\pi}q} \exp\left( \frac{-d_2^2}{2} \right)
\Phi\left( \sqrt{1+q^2}a - d_2/q \right) ,\\
\mbox{where } d_2 = \frac{q}{\sqrt{1+q^2}}(a-d_1) .
\end{multline*}
The derivative has a unique root, belonging to the maximum of $f$.
Solving $f'(d_1) = 0$ numerically ($d_1 \approx 0.555487$),
then computing the integral \eqref{eq:prob_int} ($\approx 0.249958$)
shows that $\max f < 1/4$ as claimed. 
(Both finding the root of the derivative and 
computing the integral numerically are easy to do, 
and $\max f < 1/4$ can be made rigorous using simple error bounds.)
\end{proof}


\section{Approximation with factor of i.i.d.\ processes} \label{sec:3}

Our goal in this section is to prove Theorem \ref{thm:appr}:
there exist linear factor of i.i.d.\ processes
approximating (in distribution) the Gaussian wave function with eigenvalue $\la$
provided that $|\la| \leq 2\sqrt{d-1}$. This will follow easily from the next lemma.
\begin{lemma} \label{lem:appr_ev}
Let $|\la| \leq 2\sqrt{d-1}$ be fixed.
For a sequence of real numbers $\al_0, \al_1, \ldots$
we define the sequence $\de_0, \de_1, \ldots$ as
\begin{equation} \label{eq:delta}
\de_0 \mathdef d \al_1 - \la \al_0; \
\de_k \mathdef (d-1)\al_{k+1} - \la \al_k + \al_{k-1},\ k \geq 1 .
\end{equation}
Then for any $\eps > 0$ there exists a sequence $\al_k$ such that
$$ \al_0^2 + \sum_{k\geq 1} d(d-1)^{k-1} \al_k^2 = 1 \mbox{ and }
\de_0^2 + \sum_{k\geq 1} d(d-1)^{k-1} \de_k^2 < \eps .$$
We can clearly assume that only finitely many $\al_k$ are nonzero.
\end{lemma}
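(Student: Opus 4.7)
The plan is to take as approximate eigenfunction the unique solution $\tilde\al_k$ of the homogeneous linear recurrence $(d-1)\tilde\al_{k+1} - \la\tilde\al_k + \tilde\al_{k-1} = 0$, $k\geq 1$, with initial data $\tilde\al_0 = 1$, $\tilde\al_1 = \la/d$; the initial condition is rigged so that the analogue of $\de_0$ vanishes, and then the recurrence guarantees that the analogue of $\de_k$ vanishes for every $k\geq 1$. Because $|\la|\leq 2\sqrt{d-1}$, the characteristic polynomial $(d-1)x^2 - \la x + 1$ has roots of modulus $1/\sqrt{d-1}$, and writing $\la = 2\sqrt{d-1}\cos\theta$ with $\theta\in[0,\pi]$ one obtains explicit formulas: for $\theta\in(0,\pi)$,
\[
\tilde\al_k = (d-1)^{-k/2}\bigl(\cos k\theta + B\sin k\theta\bigr) \quad\text{with}\quad B = \tfrac{(d-2)\cos\theta}{d\sin\theta},
\]
and for $\theta\in\{0,\pi\}$ (the critical case $\la=\pm 2\sqrt{d-1}$),
\[
\tilde\al_k = (d-1)^{-k/2}(\pm 1)^k(1 + Bk) \quad\text{with}\quad B = \tfrac{d-2}{d}\neq 0,
\]
the nonvanishing of $B$ at the boundary being an easy consequence of $d\geq 3$.

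Next I would truncate: for a large integer $N$, set $\al_k := \tilde\al_k$ for $0\leq k\leq N$ and $\al_k := 0$ for $k>N$. Because $\tilde\al$ satisfies the recurrence and the initial condition exactly, the only defects that can be nonzero are those where the truncation cuts in, namely
\[
\de_N = -(d-1)\tilde\al_{N+1} \qquad\text{and}\qquad \de_{N+1} = \tilde\al_N,
\]
so
\[
\de_0^2 + \sum_{k\geq 1} d(d-1)^{k-1}\de_k^2 = d(d-1)^{N+1}\tilde\al_{N+1}^2 + d(d-1)^N\tilde\al_N^2.
\]
Substituting the explicit formula, the factor $(d-1)^{-k}$ in $\tilde\al_k^2$ cancels the power of $(d-1)$ out front, leaving a quantity that is uniformly $O(1)$ in the subcritical case $\theta\in(0,\pi)$ and of order $O(N^2)$ in the critical case $\theta\in\{0,\pi\}$ (because of the linear factor $1+Bk$).

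Finally, I would rescale by $\al_k \leftarrow \al_k/M_N$, where $M_N^2 := \al_0^2 + \sum_{k=1}^N d(d-1)^{k-1}\tilde\al_k^2$. The same cancellation turns $M_N^2$ into $1 + \frac{d}{d-1}\sum_{k=1}^N(\cos k\theta + B\sin k\theta)^2$ in the subcritical case, and into $1 + \frac{d}{d-1}\sum_{k=1}^N(1+Bk)^2$ at the boundary. The standard identity $\sum_{k=1}^N(\cos k\theta + B\sin k\theta)^2 = \frac{1+B^2}{2}N + O(1)$, valid for $\theta\in(0,\pi)$, gives $M_N^2 \asymp N$ subcritically, while a direct computation yields $M_N^2 \asymp N^3$ at the boundary. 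In both regimes the ratio of the weighted defect norm to $M_N^2$ is $O(1/N)$, so it drops below any preassigned $\eps$ once $N$ is large enough, proving the lemma. The main subtlety is the boundary case $|\la|=2\sqrt{d-1}$: there the eigenfunction picks up a polynomial factor, so the truncation defect grows with $N$, but the denominator grows even faster (cubically versus quadratically), so the normalization still beats the defect.
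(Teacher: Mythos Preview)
Your argument is correct. Both your proof and the paper's start from the same exact eigenfunction $\tilde\al_k$ (the paper writes it as $(d-1)^{-k/2}\be_k$ with $\be_k$ bounded), but the regularizations differ. The paper multiplies by a geometric damping factor $\ro^k$ with $\ro<1$ and sends $\ro\to 1^-$: the weighted $\al$-norm diverges while the weighted $\de$-norm is shown to be $O(1-\ro)$, so after rescaling the defect is small. You instead use a hard cutoff at level $N$, which localizes the defect to two terms and makes the computation completely explicit.

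One difference in scope: the paper treats only $|\la|<2\sqrt{d-1}$ directly and then appeals to a limit for the endpoint, whereas you handle the boundary case $|\la|=2\sqrt{d-1}$ head-on by tracking the polynomial factor $1+Bk$ in the repeated-root solution and observing that the denominator $M_N^2\asymp N^3$ still dominates the defect $\asymp N^2$. Your approach is slightly more hands-on but gives a uniform $O(1/N)$ rate in both regimes; the paper's damping argument is a bit slicker in the interior but less transparent at the boundary.
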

\begin{remark}
We can think of such sequences $\al_k$ as
invariant approximate eigenvectors on $T_d$.
Let us fix a root of $T_d$ and
write $\al_k$ on vertices at distance $k$ from the root.
Then the vector $f \in \ell_2( V(T_d) )$ obtained
is spherically symmetric around the root
(i.e., $f$ is invariant under automorphisms fixing the root).
Furthermore, $ \| f \|^2 = \al_0^2 + \sum_{k\geq 1} d(d-1)^{k-1} \al_k^2 $.

As for the sequence $\de_k$, it corresponds to
the vector $A_{T_d} f - \la f \in \ell_2( V(T_d) )$,
where $A_{T_d}$ denotes the adjacency operator of $T_d$.
Therefore $ \| A_{T_d} f - \la f \|^2 = \de_0^2 + \sum_{k\geq 1} d(d-1)^{k-1} \de_k^2 $.
So the real content of the above lemma is that for any $\eps > 0$
there exists a spherically symmetric vector $f \in \ell_2( V(T_d) )$ such that
$ \| f \| = 1$ and $\| A_{T_d} f - \la f \| < \eps$.

In the best scenario $\de_k = 0$ would hold for each $k$,
that is, $\al_k$ would satisfy the following linear recurrence:
\begin{equation} \label{eq:rec}
d \al_1 - \la \al_0 = 0;  (d-1)\al_{k+1} - \la \al_k + \al_{k-1}=0
,\ k \geq 1 .
\end{equation}
However, for a non-trivial solution $\al_k$ of the above recurrence
we always have $\al_0^2 + \sum_{k\geq 1} d(d-1)^{k-1} \al_k^2 = \infty$.
This follows from the fact that the point spectrum of $A_{T_d}$ is empty.
\end{remark}

First we show how Theorem \ref{thm:appr} follows from the above lemma.
\begin{proof}[Proof of Theorem \ref{thm:appr}]
Let $Z_v$, $v \in V(T_d)$ be independent standard normal random variables.
Let $\eps > 0$ and let $\al_k$ as in Lemma \ref{lem:appr_ev}.
Let $X_v$ be the linear factor of $Z_v$ with coefficients $\al_k$ as in \eqref{eq:lin}. Then
$$ \var(X_v) = \al_0^2 + \sum_{k\geq 1} d(d-1)^{k-1} \al_k^2 = 1.$$
Let $v_0$ be an arbitrary vertex with neighbors $v_1, \ldots, v_d$. It is easy to see that
$$ X_{v_1} + \ldots + X_{v_d} - \la X_{v_0} =
(d \al_1 - \la \al_0) Z_{v_0} +
\sum_{k=1}^\infty \sum_{u: d(v_0,u) = k}
\left( (d-1)\al_{k+1} - \la \al_k + \al_{k-1} \right) Z_u .$$
So $X_{v_1} + \ldots + X_{v_d}-\la X_{v_0}$ is also a linear factor
with coefficients $\de_k$ as defined in \eqref{eq:delta}.
Therefore the variance of $X_{v_1} + \ldots + X_{v_d}-\la X_{v_0}$
is $\de_0^2 + \sum_{k\geq 1} d(d-1)^{k-1} \de_k^2 < \eps$.

What can we say about the covariance sequence $\si_k$ of the Gaussian process $X_v$?
We have $\si_0 = 1$ and
$$ \left| d \si_1 - \la \si_0 \right|,
\left| (d-1) \si_{k+1} - \la \si_k + \si_{k-1} \right| \leq
\sqrt{ \var( X_u ) \var\left( X_{v_1} + \cdots + X_{v_d} - \la X_{v_0} \right) }
< \sqrt{\eps} .$$
In other words, the equations in \eqref{eq:cov_rec} hold
with some small error $\sqrt{\eps}$.
If $K$ is a positive integer and $\de > 0$ is a real number,
then for sufficiently small $\eps$ we can conclude that
for $k \leq K$ the covariance $\si_k$ is closer than $\de$
to the actual solution of \eqref{eq:cov_rec}.
It follows that if $\eps$ tends to $0$,
then the covariance sequence of $X_v$ pointwise converges
to the unique solution of \eqref{eq:cov_rec}.
It follows that $X_v$ converges
to the Gaussian wave function in distribution as $\eps \to 0$.
\end{proof}
\begin{proof}[Proof of Lemma \ref{lem:appr_ev}]
It is enough to prove the statement for $|\la|<2\sqrt{d-1}$,
the case $\la = \pm 2\sqrt{d-1}$ then clearly follows.
Excluding $\pm 2\sqrt{d-1}$ will spare us some technical difficulties.

Let $\be_k$ be a solution of the following recurrence
\begin{equation} \label{eq:rec2}
d \be_1 - \la \sqrt{d-1} \be_0 = 0;
\be_{k+1} - \frac{\la}{\sqrt{d-1}} \be_k + \be_{k-1}=0
,\ k \geq 1 .
\end{equation}
(This is the recurrence that we would get from \eqref{eq:rec}
had we made the substitution $\be_k = (d-1)^{k/2} \al_k$.)
Since $|\la| < 2\sqrt{d-1}$, the quadratic equation
$x^2 - \frac{\la}{\sqrt{d-1}} x + 1 = 0$
has two complex roots, both of norm $1$, which implies that
\eqref{eq:rec2} has bounded solutions. Set
\begin{equation} \label{eq:al_def}
\al_k \mathdef \ro^k (d-1)^{-k/2} \be_k
\end{equation}
for some positive real number $1/2 \leq \ro < 1$.
Since $\be_k$ is bounded, $\al_0^2 + \sum_{k\geq 1} d(d-1)^{k-1} \al_k^2$ is finite
for any $\ro < 1$. It is also easy to see that 
$\al_0^2 + \sum_{k\geq 1} d(d-1)^{k-1} \al_k^2$ 
tends to infinity as $\ro \to 1-$. Furthermore,
\begin{multline*}
\de_k = (d-1)\al_{k+1} - \la \al_k + \al_{k-1} =
(d-1)^{-(k-1)/2}\ro^{k} \left( \ro \be_{k+1} -
\frac{\la}{\sqrt{d-1}} \be_k + \ro^{-1} \be_{k-1} \right) = \\
(d-1)^{-(k-1)/2}\ro^{k} (
\underbrace{ \be_{k+1} - \frac{\la}{\sqrt{d-1}} \be_k + \be_{k-1} }_{0}
+(\ro-1)\be_{k+1} + (\ro^{-1}-1)\be_{k-1} ).
\end{multline*}
Thus
$$ \sum_{k \geq 1} d(d-1)^{k-1} \de_k^2 \leq
d \sum_{k \geq 1} \ro^{2k} \left( (\ro-1)\be_{k+1} + (\ro^{-1}-1)\be_{k-1} \right)^2 .$$
Using that $\ro^{-1}-1 = (1-\ro)/\ro\leq 2(1-\ro)$ and
the fact that $\be_k$ is bounded we obtain that
$$ \sum_{k \geq 1} d(d-1)^{k-1} \de_k^2 \leq C(1-\ro)^2 \sum_{k \geq 1} \ro^{2k} =
C(1-\ro)^2 \frac{\ro^2}{1-\ro^2} = C \frac{\ro^2}{1+\ro} (1-\ro) \leq C (1-\ro) ,$$
where $C$ might depend on $d$ and $\la$, but not on $\ro$.
Therefore the above sum tends to $0$ as $\ro \to 1-$.
A similar calculation shows that $\de_0 \to 0$, too.
Therefore $\de_0^2 + \sum_{k \geq 1} d(d-1)^{k-1} \de_k^2 \to 0$.
Choosing $\ro$ sufficiently close to $1$ and rescaling $\al_k$ completes the proof.
\end{proof}
%


\section{Independent sets} \label{sec:4}

In this section we explain how one can find large independent sets
in $d$-regular, large-girth graphs using the Gaussian wave functions on $T_d$
and their linear factor of i.i.d.\ approximations.

Let $X_v$ be a linear factor of i.i.d.\ process
on the $d$-regular tree $T_d$
that has only finitely many nonzero coefficients:
\begin{equation} \label{eq:lin2}
X_v = \sum_{k=0}^N \sum_{u: d(v,u) = k} \al_k Z_u
\mbox{, where } \al_0, \al_1, \ldots, \al_N \in \IR .
\end{equation}
We will present different ways to produce independent sets on $T_d$
using the random variables $X_v$.
In each case the decision whether a
given vertex $v$ is chosen for the independent set will depend
(in a measurable and invariant way) only on the values of
the random variables $X_u$, $|d(v,u)|<N'$, where $N'$ is some fixed constant.
Therefore the obtained random independent set
will be a factor of the i.i.d.\ process $Z_v$.
Moreover, whether a given vertex $v$ is chosen will
depend only on the values in the $N+N'$-neighborhood of $v$.
It follows that the same random procedure can be
carried out on any $d$-regular finite graph
provided that its girth is sufficiently large,
and the probability that a given vertex is chosen will be the same.


So we can work on $T_d$ (instead of graphs with sufficiently large girth).
We will choose the coefficients $a_0, \ldots, a_N$ in such a way
that the process \eqref{eq:lin2} approximates the Gaussian wave function
with eigenvalue $\la = -2\sqrt{d-1}$ (see Theorem \ref{thm:appr}).
In the limit we can actually replace the underlying process $X_v$
with the Gaussian wave function.
So from this point on, let $X_v$, $v\in V(T_d)$ denote
the Gaussian wave function with eigenvalue $-2\sqrt{d-1}$.
We will define random independent sets on $T_d$
that are measurable and invariant functions of this process $X_v$.
Then the probability $p$ that $v$ is
in the independent set is the same for every vertex $v$.
We will call this probability $p$ the \emph{size} of
the random independent set.
(If we replace the underlying Gaussian wave function $X_v$
with an approximating process in the form \eqref{eq:lin2},
but otherwise use the same measurable and invariant way to
produce a random independent set from the underlying process,
then we get a factor of i.i.d.\ independent set with size arbitrarily close to $p$.
Once we work with processes like \eqref{eq:lin2},
we can carry out the procedure on finite regular graphs as well
provided that the girth is sufficiently large.
Thus for any $\eps > 0$ and for any $n$-vertex, $d$-regular graph $G$
with girth sufficiently large (depending on $\eps$)
we have a random independent set in $G$
with expected size at least $(p-\eps)n$.
It means that the $\liminf$ (as the girth goes to infinity)
of the independence ratio is at least $p$.)

Our method works best when the degree $d$ is equal to $3$.

\subsection{The $3$-regular case}

Let $d=3$, then $\la = -2\sqrt{d-1}=-2\sqrt{2}$ and the covariance sequence of $X_v$ is
$$ \si_0 = 1; \si_1 = \frac{-2\sqrt{2}}{3}; \si_2 = \frac{5}{6}; \ldots .$$

\bigskip

\noindent\textbf{First approach.} We choose those vertices $v$
for which $X_v > X_u$ for each neighbor $u \in N(v)$.

We need to compute the probability
$$ P\left( X_{v_0} > X_{v_1}; X_{v_0} > X_{v_2}; X_{v_0} > X_{v_3}\right) ,$$
where $v_0$ is an arbitrary vertex with neighbors $v_1, v_2, v_3$.
We will use the fact that if $(Y_1,Y_2,Y_3)$ is a non-degenerate multivariate Gaussian,
then the probability that each $Y_i$ is positive can be
expressed in terms of the pairwise correlations as follows:
\begin{equation} \label{eq:corr}
P\left( Y_1 > 0; Y_2 > 0 ; Y_3 >0 \right) =
\frac{1}{2} - \frac{1}{4\pi} \sum_{1\leq i < j \leq 3}
\arccos\left( \corr( Y_i, Y_j ) \right) .
\end{equation}
Indeed, the probability on the left can be expressed as
the standard Gaussian measure of the intersection of
three half-spaces through the origin.
This, in turn, equals the relative area of a spherical triangle
with angles $\pi - \arccos\left( \corr( Y_i, Y_j ) \right)$,
which is given by the standard formula \eqref{eq:corr}.

Let $Y_i = X_{v_0} - X_{v_i}$, $i=1,2,3$, then we have
$$ \corr(Y_1, Y_2) = \frac{ \cov(Y_1,Y_2) }{ \sqrt{\var(Y_1) \var(Y_2)} } =
\frac{ 1+\si_2-2\si_1 }{2-2 \si_1} = \frac{11+8\sqrt{2}}{12+8\sqrt{2}} =
\frac{1+2\sqrt{2}}{4} .$$
The two other correlations are the same, therefore
$$ P\left( v_0 \mbox{ is chosen} \right) =
\frac{1}{2} - \frac{3}{4\pi} \arccos \left( \frac{1+2\sqrt{2}}{4} \right) =
0.4298245... $$
So by simply choosing each vertex that is larger than its neighbors,
we get an independent set of size larger than $0.4298$.
Note that we could choose the vertices that are smaller than their neighbors
and would get an independent set of the same size.
Moreover, these two independent sets are clearly disjoint.


\bigskip

\noindent\textbf{Second approach.}
We fix some threshold $\tau \in \IR$ and we delete those vertices $v$
for which $X_v > \tau$, then we consider
the connected components of the remaining graph.
If a component is small (its size is at most some fixed $N'$),
then we choose an independent set of size at least half the size of the component.
We can do this in a measurable and invariant way.
For example, we partition the component into two independent sets
(this partition is unique, since each component is connected and bipartite),
if one is larger than the other, we choose the larger,
if they have equal size, we choose the one containing
the vertex with the largest value in the component.
If a component is large, then we simply do not choose any vertex from that component.
(The idea is to set the parameter $\tau$ in such a way that
the probability of large components is very small.)

We used a computer to simulate the procedure described above.
Setting $\tau = 0.12$ and $N'=200$ the simulation showed that
the probability that a given vertex is chosen is above $0.438$.
In what follows we will provide rigorous
(but -- in the case of the best result -- computer-assisted)
estimates of this probability.

From this point on, we will assume that $\tau$ is below the critical threshold,
that is, each component is finite almost surely.
It follows that with probability arbitrarily close to $1$
the component of any given vertex has size at most $N'$
provided that $N'$ is sufficiently large.
Let $p_s$ denote the probability that the component of a given vertex has size $s$.
(If a vertex is deleted, then we say that its component has size $0$.
Thus $p_0$ is simply the probability that $X_v > \tau$.)
If a component has size $2k-1$ for some $k \geq 1$,
then we choose at least $k$ vertices from the component.
If a component contains an even number of vertices,
then we choose at least half of the vertices.
Thus the probability that a vertex is chosen
(in the limit as $N' \to \infty$) is at least
\begin{equation} \label{eq:prob_sum}
\sum_{k=1}^\infty \frac{k}{2k-1} p_{2k-1} +
\frac{1}{2}\left( 1 - p_0 - \sum_{k=1}^\infty p_{2k-1} \right) =
\frac{1}{2}(1-p_0) + \sum_{k=1}^\infty \frac{1}{2(2k-1)} p_{2k-1} .
\end{equation}
The main difficulty in this approach is to determine (or estimate)
the probabilities $p_{2k-1}$
(each can be expressed as an integral of a multivariate Gaussian,
where the domain of the integration is an unbounded polyhedron).
These integrals can be computed with high precision using a computer (up to $p_5$).

\smallskip

\noindent$\boxed{\tau = 0}$
First we discuss what bound can be obtained
with no computer assistance whatsoever.
If we set $\tau=0$, then clearly $p_0 = 1/2$.
We can even compute the exact value of $p_1$. We notice that
$X_{v_1}>0$, $X_{v_2}>0$ and $X_{v_3}>0$ imply that $X_{v_0} < 0$,
because we have a Gaussian wave function with negative eigenvalue.
Thus using \eqref{eq:corr} we obtain
\begin{multline*}
p_1 = P \left( X_{v_0} \leq 0; X_{v_1}>0; X_{v_2}>0; X_{v_3}>0 \right) =
P \left( X_{v_1}>0; X_{v_2}>0; X_{v_3}>0 \right) = \\
\frac{1}{2} - \frac{3}{4\pi} \arccos \left( \corr( X_{v_1}, X_{v_2} ) \right) =
\frac{1}{2} - \frac{3}{4\pi} \arccos \left( \frac{5}{6} \right) .
\end{multline*}
Using this and the trivial estimates $p_{2k-1} > 0$ for $k \geq 2$,
\eqref{eq:prob_sum} yields the following lower bound:
$$ \frac{1}{2} - \frac{3}{8\pi} \arccos \left( \frac{5}{6} \right) = 0.4300889... $$
As far as the authors know, this is the best bound
that is not computer-aided.

Doing the same for vertices above the threshold (i.e., vertices with positive values)
clearly results in an another independent set
that has the same size and that is disjoint from the other independent set.
The induced subgraph on the union of these two disjoint independent sets
is a bipartite graph. This proves Theorem \ref{thm:ind_bipartite}.

\smallskip

\noindent $\boxed{\tau = 0.086}$
Here we discuss how we obtained the bound
$0.4361$ stated in Theorem \ref{thm:3reg}.
We set $\tau = 0.086$
(the largest $\tau$ for which we know
the components to be finite almost surely, see Theorem \ref{thm:fin_comp}).
Then $p_0 = 1-\Phi(0.086) = 0.46573321...$, where $\Phi$ is
the cumulative distribution function of the standard Gaussian.
Given a fixed path containing $s$ vertices of $T_3$,
$p'_s$ denotes the probability that the path is a component.

For any $k \geq 2$ the number of paths with $2k-1$ vertices
through any given vertex is $(2k-1) \cdot 3 \cdot 4^{k-2}$.
Furthermore, a component with $3$ vertices must be a path,
therefore we have the following relations between $p_{2k-1}$ and $p'_{2k-1}$:
\begin{equation} \label{eq:p&p'}
p_1 = p'_1; \ p_3 = 9 p'_3; \ 
p_{2k-1} \geq (2k-1) \cdot 3 \cdot 4^{k-2} p'_{2k-1}, \ k\geq 2 .
\end{equation}
As explained in the appendix, 
the probabilities $p'_s$ can be expressed as integrals. 
Although the occurring integrals cannot be computed analytically, 
the approximate values of $p'_1$, $p'_3$, and $p'_5$ 
can be determined by numerical integration: 
$$ p'_1 \approx 0.3272861614; \ 
p'_3 \approx    0.0025551311; \ 
p'_5 \approx    0.0002640467 .$$
Therefore
$$ p_1 = p'_1 \approx 0.3272861614; \ 
p_3 = 9 p'_3 \approx  0.0229961799; \ 
p_5 \geq 60 p'_5 \approx 0.0158428 .$$
Then the resulting lower bound for \eqref{eq:prob_sum} 
\begin{equation} \label{eq:rlb}
\frac{1}{2}(1-p_0) + \frac{1}{2}p_1 + \frac{1}{6}p_3 + \frac{1}{10} p_5 \geq 
0.5 (1-p_0) + 0.5 p'_1 + 1.5 p'_3 + 6 p'_5 \approx 0.43619355 . 
\end{equation}
We proved that the overall error is less than $0.000082$,
therefore the obtained bound is certainly above $0.4361$.
(See the appendix for details on the numerical integration and the error bound.)
\begin{remark}
The same numerical integrations can be carried out when $\tau = 0$,  
and thus one can get non-trivial estimates for $p_3$ and $p_5$ in that case, too. 
This way the bound in Theorem \ref{thm:ind_bipartite} can actually be improved to $0.868$. 
\end{remark}

\subsection{The $d \geq 4$ case}

The methods presented above for finding independent sets in $T_3$
work for regular trees with higher degree, too.
However, computing the occurring integrals even numerically
(with the required precision) seems very hard.
According to our computer simulation the second approach
with $\tau = 0.04$ would yield a lower bound $0.3905$ for $d=4$,
but we cannot prove this bound rigorously.
Note that the current best bound is $0.3901$
\cite[Table 5.3.1]{hoppen_thesis}.
When the degree is higher than $4$, our approach is not
as efficient as previous approaches in the literature.


\section{Appendix}

Let us consider the Gaussian wave function with eigenvalue
$\la = -2\sqrt{2}$ on the $3$-regular tree $T_3$.
We delete the vertices with value more than $\tau$
for some fixed positive real number $\tau$
and consider the components of the remaining vertices.
For an integer $s$ let $p_s$ denote the probability
that the component of a given vertex has size $s$.
These probabilities were used in Section \ref{sec:4} to bound
the independence ratio of $3$-regular, large-girth graphs, see \eqref{eq:prob_sum}.
Therefore, to get actual bounds, we need to determine (or estimate) $p_s$.
In what follows we will explain how $p_s$ can be expressed as an integral
in a way that the integration can be performed numerically with high precision
(at least for small integers $s$).

\subsection{Expressing $p_s$ as integrals}

Let $k \geq 0$ be an integer and let us fix
a path in $T_3$ with $k+2$ vertices: $v_0,v_1, \ldots, v_{k+1}$.
For $1 \leq i \leq k$ the neighbor of $v_i$
different from $v_{i-1}$ and $v_{i+1}$ is denoted by $v'_i$,
while the two neighbors of $v_0$ different from $v_1$ are $v'_0$ and $v''_0$.
The random variables (in the Gaussian wave function)
assigned to $v_i$, $v'_i$ and $v''_i$ will be
denoted by $X_i$, $X'_i$ and $X''_i$, respectively.
\begin{center}
\tikzstyle{every node}=[circle, draw, fill=black!50,inner sep=0pt, minimum width=4pt]
\begin{tikzpicture}[thick,scale=1.0]

\draw (0cm,0) node (v4) [label=below:$v_4$] {} 
  -- (2cm,0) node (v3) [label=below:$v_3$] {} 
  -- (4cm,0) node (v2) [label=below:$v_2$] {}
  -- (6cm,0) node (v1) [label=below:$v_1$] {} 
  -- (8cm,0) node (v0) [label=below:$v_0$] {} 
     (2cm,1cm) node (v3') [label=above:$v'_3$] {} 
     (4cm,1cm) node (v2') [label=above:$v'_2$] {} 
     (6cm,1cm) node (v1') [label=above:$v'_1$] {} 
     (10cm,1cm) node (v0') [label=above:$v'_0$] {} 
     (10cm,-1cm) node (v0'') [label=below:$v''_0$] {};

\draw
  (v0) -- (v0')
  (v0) -- (v0'')
  (v1) -- (v1')
  (v2) -- (v2')
  (v3) -- (v3');

\end{tikzpicture}

\end{center}
We define the function $f_k:\IR^2 \to [0,1]$ as the following conditional probability:
$$ f_k(x_{k+1}, x_k) =
P\left( X_i \leq \tau, 0 \leq i \leq k-1; X'_i > \tau, 0 \leq i \leq k ;
X''_0 > \tau | X_{k+1} = x_{k+1} ; X_k = x_k \right) .$$
The figure below shows the case $k=3$.
\begin{center}
\tikzstyle{every node}=[circle, draw, fill=black!50,inner sep=0pt, minimum width=4pt]
\begin{tikzpicture}[thick,scale=1.0]

\draw (0cm,0) node (v4) [label=below:$X_4 \eq x_4$] {} 
  -- (2cm,0) node (v3) [label=below:$X_3 \eq x_3$] {} 
  -- (4cm,0) node (v2) [label=below:$X_2 \leq \tau$] {}
  -- (6cm,0) node (v1) [label=below:$X_1 \leq \tau$] {} 
  -- (8cm,0) node (v0) [label=below:$X_0 \leq \tau$] {} 
     (2cm,1cm) node (v3') [label=above:$X'_3 > \tau$] {} 
     (4cm,1cm) node (v2') [label=above:$X'_2 > \tau$] {} 
     (6cm,1cm) node (v1') [label=above:$X'_1 > \tau$] {} 
     (10cm,1cm) node (v0') [label=above:$X'_0 > \tau$] {} 
     (10cm,-1cm) node (v0'') [label=below:$X''_0 > \tau$] {};

\draw
  (v0) -- (v0')
  (v0) -- (v0'')
  (v1) -- (v1')
  (v2) -- (v2')
  (v3) -- (v3');

\end{tikzpicture}

\end{center}
There is a recursive integral formula for these functions.
According to Remark \ref{rm:3reg_gen} there exists a standard Gaussian $Z_k$
independent from $X_{k+1}$, $X_k$ such that
\begin{align*}
X_{k-1} &= -\sqrt{2} X_k - \frac{1}{2} X_{k+1} - \frac{1}{ 2\sqrt{3} } Z_k \ \mbox{ and}\\
X'_{k} &= -\sqrt{2} X_k - \frac{1}{2} X_{k+1} + \frac{1}{ 2\sqrt{3} } Z_k .
\end{align*}
This yields the following formula for
the conditional probability $f_k(x_{k+1}, x_k)$ for $k\geq 1$:
\begin{equation} \label{eq:fk}
f_k(x_{k+1}, x_k) = \int_{| 2\sqrt{6} x_k + \sqrt{3} x_{k+1} + 2\sqrt{3}\tau |}^{\infty}
\phi(z_k) f_{k-1}\left( x_k,
-\sqrt{2} x_k - \frac{1}{2} x_{k+1} - \frac{1}{ 2\sqrt{3} } z_k \right) \, \mathrm{d} z_k ,
\end{equation}
where $\phi(t) = e^{-t^2/2} / \sqrt{2\pi}$ is
the density function of the standard normal distribution.
As for the case $k=0$ (see the figure below), we have
$$ f_0(x_1, x_0) = \int_{2\sqrt{6} x_0 + \sqrt{3} x_{1} + 2\sqrt{3}\tau}
^{-(2\sqrt{6} x_0 + \sqrt{3} x_{1} + 2\sqrt{3}\tau)}
\phi(z_0) \, \mathrm{d} z_0 .$$
(We use the convention that $\int_a^b $ is $0$ whenever $a>b$.)
\begin{center}
\tikzstyle{every node}=[circle, draw, fill=black!50,inner sep=0pt, minimum width=4pt]
\begin{tikzpicture}[thick,scale=1.0]

\draw 
  (0cm,0) node (v1) [label=below:$X_1 \eq x_1$] {} 
  -- (2cm,0) node (v0) [label=below:$X_0 \eq x_0$] {} 
     (4cm,1cm) node (v0') [label=above:$X'_0 > \tau$] {} 
     (4cm,-1cm) node (v0'') [label=below:$X''_0 > \tau$] {};

\draw
  (v0) -- (v0')
  (v0) -- (v0'');

\end{tikzpicture}

\end{center}
So 
\begin{equation} \label{eq:f0}
f_0(x_1, x_0) = g_0(2\sqrt{6} x_0 + \sqrt{3} x_{1} + 2\sqrt{3}\tau) 
\mbox{, where }
g_0(t) = \left\{ 
\begin{array}{ll}
1-2\Phi(t) & \mbox{if } t<0\\
0 & \mbox{otherwise.}
\end{array}
\right.
\end{equation}
(Here $\Phi$ denotes the cumulative distribution function 
of the standard normal distribution.)

For a positive integer $s$ let us fix a path in $T_3$
containing $s$ vertices. Then $p'_s$ will denote
the probability that this path is a component, that is,
the values on the vertices of the path are all below $\tau$
and the values on all the adjacent vertices are above $\tau$.
(See \eqref{eq:p&p'} for the relation between $p_s$ and $p'_s$.)
In view of Remark \ref{rm:ind}, the probabilities $p'_s$
can be expressed with the functions $f_k$ as follows:
if $s \geq 2$, then for any integer $0 \leq m \leq s-2$
\begin{equation} \label{eq:p'}
p'_{s} =
\int_{-\infty}^\tau \int_{-\infty}^\tau \phi_2(u,v) f_m(u,v) f_{s-2-m}(v,u)
\, \mathrm{d} v \, \mathrm{d} u ,
\end{equation}
where $\phi_2$ is the density function of the $2$-dimensional
centered normal distribution with covariance matrix
$\begin{pmatrix}
  1 & \si_1  \\
  \si_1 & 1
\end{pmatrix}$, where $\si_1 = -2\sqrt{2}/3$.
As for $s=1$,
$$ p_1 = p'_1 = \int_{\tau}^\infty \int_{-\infty}^\tau \phi_2(u,v) f_0(u,v)
\, \mathrm{d} v \, \mathrm{d} u .$$
Our goal is to find the value of $p'_1$, $p'_3$ and $p'_5$.
Using \eqref{eq:p'} with $s=3, m=0$ and $s=5, m=1$:
\begin{align*}
p'_3 &= \int_{-\infty}^\tau \int_{-\infty}^\tau  \phi_2(u,v) f_0(u,v) f_1(v,u)
\, \mathrm{d} v \, \mathrm{d} u ,\\
p'_5 &= \int_{-\infty}^\tau \int_{-\infty}^\tau  \phi_2(u,v) f_1(u,v) f_2(v,u)
\, \mathrm{d} v \, \mathrm{d} u .
\end{align*}
%


\subsection{Numerical integration and bounding the error}

Next we explain how the above integrals 
(expressing $p'_1$, $p'_3$, and $p'_5$) 
can be computed numerically. 
We first have to compute the functions $f_0$, $f_1$, $f_2$. 
We will store their (approximate) values at the points of a fine grid, 
and we treat them as if they were $0$ outside some bounded region. 
Once we know $f_k$, the value of $f_{k+1}$ at each point can be obtained 
as a one-dimensional integral, see \eqref{eq:fk}. 
We divide the interval of integration into little pieces and 
on each piece $[x,x+\delta]$ we approximate the integral 
using the \emph{trapezoid rule}: 
$$\int_x^{x+\delta}f(t)\,\mathrm{d}t \approx 
\delta \frac{f(x)+f(x+\delta)}{2} .$$
When computing $f_{k+1}(\vec{x})$ at some point $\vec{x} \in \IR^2$ on our grid, 
we need the values of $f_{k}$ at points that are not on our grid. 
These values are interpolated from the values at the closest grid points 
in a bilinear way in the two coordinates. 
Once we have computed $f_0$, $f_1$, and $f_2$, 
the final (two-dimensional) integrals are calculated using 
the two-dimensional version of the trapezoid rule. 
The overall run-time is cubic in the resolution of the grid. 
We have to choose our grid carefully to get 
a reasonable run-time and reach the needed precision. 

Next, we explain how to estimate the numerical error, 
which comes from the following five sources:
\begin{itemize}
\item truncation of the region of integration,
\item error in the trapezoid rule,
\item using interpolated values of some functions,
\item floating point errors, and
\item errors carried over from previous integration. 
\end{itemize}

The function $f_0$ can be expressed 
in terms of the cumulative distribution function of 
the standard normal distribution $\Phi$, see \eqref{eq:f0}. 
According to \eqref{eq:fk} the value $f_{k+1}$ 
at some point $\vec{x} \in \IR^2$ is defined 
as a (one-dimensional) integral of the following form:
\begin{equation} \label{eq:fk_simple}
f_{k+1}(\vec{x}) = \int_{|\vec{c}^T\vec{x} + d|}^\infty
\phi(z)f_{k}(A\vec{x}+\vec{b}z) \,\mathrm{d} z,
\end{equation}
where $A$ is a $2 \times 2$ matrix, $\vec{b}$, $\vec{c}$ are two-dimensional vectors, 
and $d$ is a real number. 
It is clear from \eqref{eq:f0} that $ 0\le f_0(\vec{x})\le 1 $. 
It easily follows by induction that 
$$ 0\le f_k(\vec{x})\le 2^{-k} \mbox{ for all } 
\vec{x} \in \mathbb{R}^2 .$$ 
Thus when we change the interval of integration in \eqref{eq:fk_simple} 
to $\left[|\vec{c}^T\vec{x} + d|, R\right]$ for some $R>0$, 
we make an error less than the tail probability of 
a standard Gaussian, which can be bounded as follows: 
$$ \int_{R}^\infty \phi(z) \,\mathrm{d} z \le 
\frac{e^{-R^2/2}}{R\sqrt{2\pi}} .$$

Let us now turn to the error of the trapezoid rule. 
If $f$ is doubly differentiable on the interval $[x,x+\delta]$, then 
$$\int_x^{x+\delta}f(t)\,\mathrm{d}t = 
\delta \frac{f(x)+f(x+\delta)}{2} - \frac{\delta^3}{12}f''(\xi) $$
for some $\xi \in [x,x+\delta]$, see \cite[p.~216]{atkinson}. 
So whenever we have a good uniform bound for $|f''|$ on the interval $[x,x+\delta]$, 
the trapezoid rule gives a good approximation of the integral:
$$ \left| \int_x^{x+\delta}f(t)\,\mathrm{d}t - 
\delta \frac{f(x)+f(x+\delta)}{2} \right| \leq 
\frac{\delta^3}{12} \sup_{\xi \in [x,x+\delta]} \left|f''(\xi)\right| .$$
Unfortunately, in our case $f$ is not always twice differentiable:
the absolute value in the integration bound in \eqref{eq:fk_simple} 
causes the first derivative to jump. 
This, however, occurs ``rarely'' and for those intervals 
we may use the following weaker bound relying only on the first derivative: 
$$ \left| \int_x^{x+\delta}f(t)\,\mathrm{d}t - 
\delta \frac{f(x)+f(x+\delta)}{2} \right| \leq 
\frac{\delta^2}{3} \sup_{\xi \in [x,x+\delta]} \left|f'(\xi)\right| .$$

Before we can use these estimates, we need to bound 
the derivatives of $\phi(z)f_{k}(A\vec{x}+\vec{b}z)$ for any fixed $\vec{x}$. 
We start with the derivatives of $f_0$, $f_1$, and $f_2$. 
These are functions of two variables that are defined recursively by integrals. 
We have found the following uniform bounds 
for the $\ell_2$ norm of the gradient vector $\partial f_k$ 
and the $\ell_2 \to \ell_2$ operator norm of 
the Hessian matrix $Hf_k$ (whenever it exists): 
\begin{align*}
\sup_\vec{x}\|\partial f_0(\vec{x})\| &\le 4.2,&
\sup_\vec{x}\|\partial f_1(\vec{x})\| &\le 5.9,& \sup_\vec{x}\|\partial f_2(\vec{x})\| &\le 6.4,\\
\sup_\vec{x} \|Hf_0(\vec{x})\| &\le 13.1,& \sup_\vec{x}
\|Hf_1(\vec{x})\| &\le 96.1, & \sup_\vec{x} \|Hf_2(\vec{x})\| &\le 252.8.
\end{align*}
On the other hand, for any fixed $\vec{x}$ we have the following 
for the first and second derivative of $\phi(z)f_{k}(A\vec{x}+\vec{b}z)$ 
(with respect to $z$):   
\begin{align*}
\left| \left( \phi(z)f_{k}(A\vec{x}+\vec{b}z) \right)' \right| \leq &
\left| \phi(z) \right| \cdot \left\| (\partial f_k)(A\vec{x}+\vec{b}z) \right\| 
\cdot \left\| \vec{b} \right\| +  
\left| \phi'(z) \right| \cdot \left| f_k(A\vec{x}+\vec{b}z) \right| ,\\
\left| \left( \phi(z)f_{k}(A\vec{x}+\vec{b}z) \right)'' \right| \leq &
\left| \phi(z) \right| \cdot \left\| (H f_k)(A\vec{x}+\vec{b}z) \right\| 
\cdot \left\| \vec{b} \right\|^2 + \\
 & 2 \left| \phi'(z) \right| \cdot \left\| (\partial f_k)(A\vec{x}+\vec{b}z) \right\| 
\cdot \left\| \vec{b} \right\| 
+ \left| \phi''(z) \right| \cdot \left| f_k(A\vec{x}+\vec{b}z) \right| .
\end{align*}
Now we are in a position to estimate the integration error 
when calculating $f_{k+1}(\vec{x})$ in \eqref{eq:fk_simple}. 
We have to add up errors of the trapezoid rule on every small interval. 
After replacing all $|f_k|$, $\|\partial f_k\|$, and $\|H f_k\|$ 
with the uniform bounds we have found, 
it remains to add up $\left|\phi(\xi_i)\right|$ 
where $\xi_i$ is a point from the $i$-th interval 
(and similarly for $\phi'$ and $\phi''$). 
Even though the points $\xi_i$ are not explicitly given, 
we can estimate these sums using the following simple observation: 
for a fixed interval length $\delta$ and 
a continuous function $f$ with total variation $V(f) < \infty$ we have
$$ \delta \sum_i |f(\xi_i)| \le \int |f| + \delta V(f) .$$

The interpolation errors can be treated similarly.
When we calculate $f_{k+1}$ at a grid point $\vec{x}$, 
we need values of $f_k$ at points $A\vec{x}+\vec{b}z$
where $z$ runs through the points of a one-dimensional grid. 
Since the values of $f_k$ are given only at the points of our grid, 
we need to interpolate from the values at those points. 
Errors coming from such interpolations can be easily bounded 
using the first and second derivatives, and 
thus can be treated the same way as errors of the trapezoid rule. 

If we carry out the above calculations using a computer, 
we get an approximate value for $f_k$ at each grid point $\vec{x}$. 
Let us denote this calculated version of $f_k$ by $\hat{f}_k$. 
We will also use the notation $\tilde{f}_k$, which is the calculated version of $f_k$ 
assuming that the computer ``knows'' $f_{k-1}$ precisely at each grid point 
and that the computer can make precise calculations with real numbers. 
The difference between $\tilde{f}_k$ and $f_k$ comes from 
the integration error, the interpolation error and 
the error coming from the fact that we are integrating on a finite interval. 
As for the difference between $\tilde{f}_k(\vec{x})$ and $\hat{f}_k(\vec{x})$, 
we have to take into account that we only know $f_{k-1}$ with some error 
($\hat{f}_{k-1}$) and such errors will be carried over to $\hat{f}_k(\vec{x})$. 
Moreover, we will also have computing errors (i.e., floating point errors). 
These types of errors are fairly easy to handle. 
Adding all these up, the obtained error bounds will 
depend only on the number of grid points $N$ 
(after fixing $R=7$ and $\tau = 0.086$): 
\begin{align*}
\sup_\vec{x}\left|\hat{f}_0(\vec{x})-f_0(\vec{x})\right| &\le 
\frac{20}{N^2} + 1.4\cdot 10^{-12} ;\\ 
\sup_\vec{x}\left|\hat{f}_1(\vec{x})-f_1(\vec{x})\right| &\le 
\frac{361}{N^2} + 2.0\cdot 10^{-12} ;\\
\sup_\vec{x}\left|\hat{f}_2(\vec{x})-f_2(\vec{x})\right| &\le 
\frac{1606}{N^2} + 2.3\cdot 10^{-12} .
\end{align*}

Once we have calculated the approximate values of $f_0$, $f_1$, and $f_2$ 
at the points of our grid, we are ready to compute 
the final two-dimensional integrals defining $p'_1$, $p'_3$, $p'_5$. 
Note that if we use the same grid for the two-dimensional numerical quadrature  
as we used for storing the values of the functions $f_k$, 
then no interpolation errors will occur. 
Other error terms can be treated along the same lines as in the one-dimensional case. 

Our original goal was to find the value of 
$0.5 (1-p_0) + 0.5 p'_1 + 1.5 p'_3 + 6 p'_5$, recall \eqref{eq:rlb}. 
Setting $N = 20000$ the numerical computations outlined above 
give $0.43619355$ and combining all the errors occurring 
we get the following upper bound for the overall error: 
$$ \frac{17094}{N^2} + 3.9\cdot 10^{-5} + \frac{5\cdot 10^5}{N^4} < 8.2\cdot 10^{-5} .$$

\subsection*{Acknowledgment}
\begin{tiny}
E.Cs.: This research was realized in the frames of T\'AMOP 4.2.4.\ 
A/1-11-1-2012-0001 ``National Excellence Program -- Elaborating and
operating an inland student and researcher personal support system''
The project was subsidized by the European Union and co-financed by
the European Social Fund. 
Research is partially supported by
European Research Council (grant agreement no.~306493),
MTA Renyi ``Lend\"ulet'' Groups and Graphs Research Group,
ERC Advanced Research Grant No.~227701,
KTIA-OTKA grant No.~77780.\\
V.H.~was supported by MTA R\'enyi ``Lend\"ulet'' Groups and Graphs Research Group.\\
B.V.~was supported by the Canada Research Chair and NSERC DAS programs.
\end{tiny}


\bibliographystyle{plain}	
\bibliography{refs}

\end{document}